\newtheorem{theorem}{Theorem}[section]
\newtheorem{cor}[theorem]{Corollary}
\newtheorem{lem}[theorem]{Lemma}
\numberwithin{equation}{section}
\newtheorem{remark}[theorem]{Remark}
\newtheorem{example}{Example}
\begin{document}
\title{\vspace{-1cm} \bf Unique continuation of Schr\"odinger-type equations for $\bar\partial$   II \rm}
\author{Yifei Pan  \ \ and \ \  Yuan Zhang}
\date{}

\maketitle

\begin{abstract}
In this paper, we extend our earlier unique continuation   results \cite{PZ2} for the Schr\"odinger-type inequality  $ |\bar\partial u| \le V|u|$ on a domain in $\mathbb C^n$ by removing    the smoothness assumption on solutions $u = (u_1, \ldots, u_N)$. More specifically, we    establish the unique continuation property  for  $W_{loc}^{1,1}$ solutions   when   the potential   $V\in L_{loc}^p $, $  p>2n$; and for $W_{loc}^{1,2n+\epsilon}$ solutions when $V\in L_{loc}^{2n}$ with   $N=1$ or $n = 2$.  Although    the unique continuation property  fails in general if  $V\in L_{loc}^{p}, p<2n$, we show that  the property still holds for $W_{loc}^{1,1}$ solutions  when   $V $ is a small constant multiple of $ \frac{1}{|z|}$.
  
\end{abstract}

\renewcommand{\thefootnote}{\fnsymbol{footnote}}
\footnotetext{\hspace*{-7mm}
\begin{tabular}{@{}r@{}p{16.5cm}@{}}
& 2020 Mathematics Subject Classification. Primary 32W05; Secondary 35J10.\\
& Key words and phrases. unique continuation, $\bar\partial$ operator, Schr\"odinger-type.
\end{tabular}}

\section{Introduction}
 Let $\Omega$ be a  domain in $\mathbb C^n$. Suppose $u: \Omega\rightarrow \mathbb C^N$ with $u\in W_{loc}^{1,1}(\Omega)$,  and satisfies the following Schr\"ordinger-type inequality \begin{equation}\label{eqn2}
     |\bar\partial u| \le V|u|\ \ \text{a.e. on}\ \ \Omega, 
\end{equation} for some nonnegative  locally Lebesgue integrable   function   $V$   on $\Omega$. This paper is a continuation of an earlier work \cite{PZ2} of the same authors on the unique continuation property for \eqref{eqn2}.  Specifically, we investigate  whether any Sobolev function $u$ satisfying \eqref{eqn2} vanishes identically if $u$ vanishes to infinite order in the $L^1$ sense  at some $z_0\in \Omega$. Here for   $q\ge 1$, $u\in L^q_{loc}(\Omega)$ is said to vanish to infinite order (or, be flat) in the $L^q$ sense at a point $z_0\in \Omega$, if for all $m\ge 1$,
\begin{equation*}\label{flat}
    \lim_{r\rightarrow 0} r^{-m}\int_{|z-z_0|<r}|u(z)|^q  dv_z =0.
\end{equation*}

While   Example \ref{pf}  indicates the general failure of the  unique continuation property  for $L_{loc}^p$ potentials with $p<2n$, it was shown in \cite{PZ2} the  property holds for smooth solutions to \eqref{eqn2} with $L_{loc}^p$ potentials, $p> 2n$, and  $L_{loc}^{2n}$ potentials if either $N=1$ or $n=2$.  Although the smooth category   remains   of primary interest,  our goal in this paper is to  extend the unique continuation property to  general Sobolev solutions.  

 We first show the unique continuation property holds for $ W_{loc}^{1,1}$ solutions to \eqref{eqn2}  when the potential $V\in L_{loc}^{p}$ for some $ p>2n$. In particular, this generalizes a unique continuation result of Bell and Lempert in \cite{BL90}  for bounded potentials.  See also a very recent result by Shi  \cite{Sh}     under some stronger assumptions on the potentials and the regularity of solutions. 
 
\begin{theorem}\label{main3}
    Let $\Omega$ be a  domain in $\mathbb C^n$. Suppose $u: \Omega\rightarrow \mathbb C^N$ with $u\in W_{loc}^{1,1}(\Omega)$,  and satisfies $ |\bar\partial u| \le V|u|$ a.e. on $\Omega$ for  $V \in L_{loc}^{p}(\Omega)$, $p>2n$.  If $u$ vanishes to infinite order  in the $L^1$ sense at some $z_0\in \Omega$, then $u$ vanishes identically.
\end{theorem}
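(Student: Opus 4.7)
The plan is to combine the Bochner--Martinelli representation of $u$ with a regularity bootstrap, then reduce to the one complex variable case via slicing, and conclude via the one-variable similarity principle.

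First, fix a ball $B = B(z_0, R) \Subset \Omega$ and apply the Bochner--Martinelli formula componentwise to $u \in W^{1,1}(B)$, giving
$$u(z) = h(z) - T(\bar\partial u)(z), \quad z \in B,$$
where each component of $h$ is holomorphic on $B$ and the volume transform $T$ has kernel bounded by $C|z-\zeta|^{1-2n}$. Starting from $u \in L_{loc}^{2n/(2n-1)}$ by Sobolev embedding and using $|\bar\partial u| \le V|u|$ together with the Hardy--Littlewood--Sobolev bound $T: L^q \to L^s$ with $\frac{1}{s} = \frac{1}{q} - \frac{1}{2n}$, iterated H\"older estimates raise the integrability exponent of $u$ by $\frac{1}{2n} - \frac{1}{p} > 0$ per step. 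After finitely many iterations $u$ becomes locally bounded, and in fact H\"older continuous on a slightly smaller ball.

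Next, slice along complex lines through $z_0$. For $\xi \in S^{2n-1}$ set $L_\xi = z_0 + \mathbb{C}\xi$, $u^\xi(t) := u(z_0 + t\xi)$, and $V^\xi(t) := V(z_0 + t\xi)$. By the absolute continuity on lines characterization of $W^{1,1}$ and Fubini applied to $V \in L^p$, for almost every $\xi$ we have $u^\xi \in W_{loc}^{1,1}$, $V^\xi \in L_{loc}^p$, and $|\partial_{\bar t} u^\xi| \le V^\xi |u^\xi|$. Using the coarea-type identity
$$\int_{S^{2n-1}} \int_{|t|<\rho} |u^\xi(t)|\, dv_t\, d\sigma(\xi) = c_n \int_{B_\rho(z_0)} \frac{|u(w)|}{|w-z_0|^{2n-2}}\, dv_w$$
together with a dyadic decomposition exploiting the $L^1$-flatness of $u$, one shows that the right-hand side is $o(\rho^M)$ for every $M$. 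A Markov--Borel--Cantelli argument then produces a full-measure set $\mathcal{G} \subset S^{2n-1}$ such that $u^\xi$ is $L^1$-flat at $0$ in the one-variable sense for every $\xi \in \mathcal{G}$.

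For each such $\xi$, apply the one-variable similarity principle: write $\partial_{\bar t} u^\xi = A^\xi u^\xi$ with matrix-valued $A^\xi$ satisfying $|A^\xi| \le V^\xi \in L_{loc}^p$, $p > 2$, extended by $0$ where $u^\xi = 0$. Solving $\partial_{\bar t} E^\xi = A^\xi E^\xi$ locally by a Cauchy-transform fixed-point iteration yields an invertible, H\"older continuous matrix-valued $E^\xi$ on a small disk about $0$ in $L_\xi$, so that $w^\xi := (E^\xi)^{-1} u^\xi$ is holomorphic. Since $E^\xi$ is bounded and nonvanishing, the flatness of $u^\xi$ transfers to $w^\xi$; a holomorphic function flat at a point vanishes identically, so $u^\xi$ vanishes on a neighborhood of $0$ in $L_\xi$, and a standard open-closed argument along $L_\xi$ gives $u^\xi \equiv 0$ on $L_\xi \cap B$. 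Since $\bigcup_{\xi \in \mathcal{G}} L_\xi$ has full measure near $z_0$ and $u$ is continuous, $u \equiv 0$ on a neighborhood of $z_0$; a further open-closed argument in the connected $\Omega$ yields $u \equiv 0$ on $\Omega$. The principal obstacle is the implementation of the similarity principle for vector-valued $W^{1,1}$ solutions with merely $L^p$ coefficients---notably the convergence of the Cauchy-transform iteration and the invertibility of the resulting $E^\xi$ for a.e.\ $\xi$.
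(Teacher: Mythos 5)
Your overall architecture matches the paper's (bootstrap regularity, slice along complex radial directions, reduce to one complex variable, extend by weak unique continuation), but you diverge in one substantive way: where the paper invokes the one-dimensional strong unique continuation result of Theorem \ref{pz} (from \cite{PZ}) as a black box, you re-prove it inline via a Bers--Vekua similarity principle for first-order systems. That route is valid. The fixed-point solution of $\partial_{\bar t} E = A E$ with $E = I + T(AE)$, where $T$ is the Cauchy transform, does converge on a sufficiently small disk when $A \in L^q_{loc}$ for some $q > 2$ (the contraction constant scales with $\|A\|_{L^q(D_\delta)} \to 0$ as $\delta \to 0$), $E$ is then $C^{0,1-2/q} \cap W^{1,q}$ and close to $I$, hence invertible, and the product rule for $W^{1,q}\cap L^\infty$ functions makes $h = E^{-1}u$ holomorphic. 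So the ``principal obstacle'' you flag is in fact fillable. Your Bochner--Martinelli plus Hardy--Littlewood--Sobolev bootstrap is the same argument as the paper's Lemma \ref{el}, and your Markov--Borel--Cantelli transfer of flatness to slices is a workable replacement for the paper's Lemma \ref{fe} (which instead upgrades $L^1$-flatness to pointwise geometric flatness $|u(z)| = O(|z|^m)$).

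There are two technical inaccuracies to correct. First, the claim ``$V^\xi \in L^p_{loc}$ for a.e.\ $\xi$'' is false for $n \ge 2$: the polar Jacobian $|t|^{2n-2}$ means a.e.\ slice only satisfies $\int_{|t|<r}|t|^{2n-2}|V^\xi|^p\,dv_t < \infty$, which by H\"older gives $V^\xi \in L^q_{loc}$ near $t=0$ only for $q < p/n$ (Corollary \ref{coo} in the paper). Fortunately $p > 2n$ gives $p/n > 2$, so $V^\xi$ still lies in $L^q_{loc}$ for some $q > 2$, which is all the similarity principle needs; but as written the exponent is wrong and your downstream application of ``$L^p$, $p>2$'' should cite the correct range. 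Second, the appeal to ``absolute continuity on lines'' to conclude $u^\xi \in W^{1,1}_{loc}$ does not apply as stated: ACL concerns a.e.\ real lines parallel to coordinate axes, whereas your slices are two-real-dimensional complex lines through the fixed point $z_0$. After the bootstrap you have $u \in W^{1,p}_{loc}$ with $p > 2n$, and the correct mechanism is the approximation argument of the paper's Lemma \ref{cgo} (smooth $u_j \to u$ in $W^{1,p}$, polar-coordinate $L^1$-control of $\nabla u_j - \nabla u$ on a.e.\ slice, then pass to the limit in the distributional gradient identity); this yields $u^\xi \in W^{1,2}(D_r)$, which is stronger than what you claimed and what you need. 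With these two repairs the proof goes through, at the cost of being considerably longer than the paper's since you are re-deriving the one-dimensional input.
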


  Similar to the approach in \cite{PZ2}, the proof utilizes a complex polar coordinate formula in Lemma \ref{cp} to convert the unique continuation problem in  higher dimensional domains to a known  one-dimensional result   in \cite{PZ} (see Theorem \ref{pz}). While the majority   of the work towards the proof was already  established in  \cite{PZ2}, unlike the  smooth  case,    the flatness of Sobolev solutions and the inequality \eqref{eqn2} do  not extend immediately to   their sliced counterparts.  The novelty in the proof is to show  that the minimal $W_{loc}^{1,1} $ Sobolev regularity  of $u$ is sufficient in ensuring   the sliced solutions to satisfy all the assumptions in Theorem \ref{pz}. In fact,     the assumptions $ u\in W_{loc}^{1,1}  $ and $V\in L^p_{loc} , p>2n$ here actually imply $ u\in W_{loc}^{1,p}  $, following  a standard boot-strap argument. This, combined with Lemma \ref{cgo}, transforms a $W_{loc}^{1, 1}$ solution  of \eqref{eqn2}     to   a family of $ W_{loc}^{1, 2} $ solutions to  some Schr\"odinger-type inequalities  along almost every complex radial direction. Moreover,  we  establish in Lemma \ref{fe}   an equivalence between the  $L^1$ flatness of $W_{loc}^{1, p}$ functions and a much stronger geometric flatness as in \eqref{hg}, which further passes the flatness  onto the sliced solutions.

When the potential $V\in L_{loc}^{2n}$,   we   obtain the following two unique continuation results: one for the case  $N=1$ (namely, $u$ is a scalar function), and the other for the case $n=2$. The integrability assumption of $V$ here   is sharp,  as indicated by Example \ref{pf}.   

\begin{theorem}\label{main5}
    Let $\Omega$ be a  domain in $\mathbb C^n$. Suppose $u: \Omega\rightarrow \mathbb C$ with $u\in W_{loc}^{1, 2n+\epsilon}(\Omega)$ for some $\epsilon>0$,  and satisfies $ |\bar\partial u| \le V|u|$ a.e. on $\Omega$ for some    $V \in L_{loc}^{2n}(\Omega)$.   If $u$ vanishes to infinite order in the $L^1$ sense  at some $z_0\in \Omega$, then $u$ vanishes identically.
\end{theorem}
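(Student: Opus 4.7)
The plan is to follow the reduction strategy developed for Theorem \ref{main3}: pass from the $n$-dimensional problem to a family of one-dimensional problems on complex lines through $z_0$ via the polar coordinate formula of Lemma \ref{cp}, then invoke the sharp 1D unique continuation result (Theorem \ref{pz}). After translating $z_0=0$, I would set $u_w(\zeta):=u(\zeta w)$ and $V_w(\zeta):=V(\zeta w)$ for $w\in S^{2n-1}$ and small $\zeta\in\mathbb C$, and aim to verify the hypotheses of Theorem \ref{pz} on almost every slice.

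The verification splits into three items. First, the flatness of $u_w$ at $\zeta=0$ should be obtained by applying Lemma \ref{fe} to upgrade the $L^1$ flatness of $u$ at $z_0$ to a geometric flatness that restricts to $L^1$ flatness of $u_w$ for a.e.~$w$. Second, the sliced inequality $|\bar\partial_\zeta u_w|\le V_w|u_w|$ should follow from the ACL characterization: since $2n+\epsilon>2n=\dim_{\mathbb R}\mathbb C^n$, Morrey embedding places $u$ in a H\"older class, so pointwise restriction to complex lines is well-defined, the slice derivative agrees with the directional derivative of the continuous representative, and the inequality transfers by Fubini. Third, applying Lemma \ref{cp} to the densities $V^{2n}$ and $|\nabla u|^{2n+\epsilon}$ yields, for a.e.~$w$, the weighted bounds $\int V_w^{2n}|\zeta|^{2n-2}\,dA<\infty$ and $\int|\partial u_w|^{2n+\epsilon}|\zeta|^{2n-2}\,dA<\infty$. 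Away from $\zeta=0$ these immediately give $V_w\in L^{2n}_{loc}$ and $u_w\in W^{1,2n+\epsilon}_{loc}$ on punctured discs, which I expect to suffice for the sharp 1D endpoint form of Theorem \ref{pz}.

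Once $u_w\equiv 0$ near $\zeta=0$ for a.e.~$w$ is in hand, integration in polar coordinates gives $u\equiv 0$ on a ball around $z_0$, and a standard connectedness argument on the set where $u$ vanishes to infinite order extends this to all of $\Omega$. The hard part will be reconciling the weighted slicewise integrability produced by polar coordinates with the unweighted hypotheses that Theorem \ref{pz} likely demands near $\zeta=0$: the weight $|\zeta|^{2n-2}$ degenerates at the origin, and the sharp $V\in L^{2n}$ assumption rules out the unweighted H\"older interpolation that was available in Theorem \ref{main3} via boot-strap. The margin $\epsilon>0$ in the hypothesis on $u$, combined with the quantitative flatness furnished by Lemma \ref{fe}, should be precisely what absorbs this singularity; the scalar restriction $N=1$ presumably enters because the sharp endpoint of Theorem \ref{pz} is only known in the scalar case.
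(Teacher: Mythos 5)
Your reduction to slices, the use of Lemma \ref{cp} and Lemma \ref{fe}, and the use of the H\"older continuity of $u$ (from $p=2n+\epsilon>2n$) to make pointwise slicing legitimate are all aligned with the paper. But the crucial step is exactly where you flag the difficulty and hope the $\epsilon$ will bail you out, and it does not. After slicing, Lemma \ref{cp} applied to $V^{2n}$ gives, for a.e.\ $\zeta\in S^{2n-1}$, only the \emph{weighted} bound $\int_{D_r}|w|^{2n-2}\,|V(w\zeta)|^{2n}\,dv_w<\infty$; this is far from $V(\cdot\,\zeta)\in L^2_{loc}(D_r)$ near $w=0$, so Theorem \ref{pz}(2) simply does not apply on the slice. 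The paper makes this failure explicit: Example \ref{exhh} exhibits $V\in L^{2n}(B_{1/2})$ whose radial restriction $\tilde V=V(w\zeta)$ is \emph{not} in $L^2_{loc}(D_{1/2})$ for any $\zeta$. The $\epsilon>0$ in the hypothesis $u\in W^{1,2n+\epsilon}$ is used only to invoke Lemma \ref{cgo} and Lemma \ref{fe} (which need $p>2n$, since the boot-strap does not improve regularity when $V\in L^{2n}_{loc}$); it does nothing to repair the sliced potential.

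The paper's actual resolution is to absorb the polar Jacobian into the potential rather than fight it: set $\tilde V(w):=|w|^{\frac{n-1}{n}}V(w\zeta)$, so that Lemma \ref{cp} gives precisely $\tilde V\in L^{2n}_{loc}(D_r)$ for a.e.\ $\zeta$, while the sliced inequality becomes the hybrid form $|\bar\partial v|\le |w|^{-\frac{n-1}{n}}\tilde V\,|v|$. The one-dimensional theorem invoked is not Theorem \ref{pz}(2) but Theorem \ref{333} (namely \cite[Theorem 5.5]{PZ2}), which is designed exactly for potentials of the form $|z|^{-\frac{\beta-1}{\beta}}V$ with $V\in L^{2\beta}_{loc}$; taking $\beta=n$ matches the sliced data. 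This also corrects your explanation of the scalar restriction: $N=1$ is imposed not because Theorem \ref{pz} is scalar-only (it is not), but because the hybrid 1D result Theorem \ref{333} is proved only for scalar $u$; when $n=2$ the companion Theorem \ref{main7} handles the vector case, which is exactly the content of Theorem \ref{main6}.
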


\begin{theorem}\label{main6}
    Let $\Omega$ be a  domain in $\mathbb C^2$. Suppose $u: \Omega\rightarrow \mathbb C^N$ with $u\in W_{loc}^{1, 2n+\epsilon}(\Omega)$ for some $ \epsilon>0$,  and satisfies $ |\bar\partial u| \le V|u|$ a.e. on $\Omega$ for some    $V \in L_{loc}^{4}(\Omega)$.   If $u$ vanishes to infinite order in the $L^1$ sense  at some $z_0\in \Omega$, then $u$ vanishes identically.
\end{theorem}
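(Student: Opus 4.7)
The plan is to follow the slicing strategy of Theorem \ref{main3}, now in the critical integrability regime $V\in L^{2n}_{loc}=L^4_{loc}$. By translating I may assume $z_0=0$ and fix a small ball $B_{r_0}\Subset\Omega$. Since $2n+\epsilon>2n=4=\dim_{\mathbb R}\mathbb C^2$, the Sobolev embedding $W^{1,4+\epsilon}_{loc}(\mathbb C^2)\hookrightarrow C^{0,\alpha}_{loc}$ with $\alpha=\epsilon/(4+\epsilon)>0$ makes $u$ H\"older continuous, hence locally bounded near $0$. This embedding plays a role analogous to the bootstrap $u\in W^{1,p}_{loc}$ used for Theorem \ref{main3}, except that here it is given to us as an assumption rather than derived from the potential.

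Next I would pass to complex radial slices via Lemma \ref{cp}: for $w\in\partial B_1\subset\mathbb C^2$ and $\zeta\in\mathbb C$, define $u_w(\zeta)=u(\zeta w)$ and $V_w(\zeta)=V(\zeta w)$. Using Lemma \ref{cgo} together with a polar Fubini argument and the assumption $u\in W^{1,4+\epsilon}_{loc}$, I would verify that for almost every $w\in\partial B_1$ the slice $u_w$ lies in $W^{1,2+\epsilon'}_{loc}(\mathbb C)$ for some $\epsilon'>0$, that the chain rule yields $|\bar\partial u_w(\zeta)|\le V_w(\zeta)|u_w(\zeta)|$ a.e.\ on a disc, and that $u_w$ vanishes to infinite order at $\zeta=0$ in the $L^1$ sense; this last point is obtained by invoking Lemma \ref{fe} to upgrade the $L^1$ flatness of $u$ at $0$ to the much stronger geometric flatness \eqref{hg}, which then restricts cleanly to each radial slice. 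After checking that $V_w$ meets the hypotheses of Theorem \ref{pz}, slicewise application of that theorem gives $u_w\equiv 0$ on a disc for a.e.\ $w$, and reassembling through the polar formula yields $u\equiv 0$ on a neighborhood of $0$. A standard argument (the set of points at which $u$ vanishes to infinite order is open by what we just proved and closed by continuity) propagates this to all of $\Omega$.

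The hard part is verifying that $V_w$ satisfies the hypothesis of Theorem \ref{pz} when $V$ lies only in the critical space $L^{2n}_{loc}$. A direct polar Fubini computation produces
\[
\int_{\partial B_1}\int_{|\zeta|<R}|V_w(\zeta)|^{2}\,dA(\zeta)\,d\sigma(w)=\int_{|z|<R}|V(z)|^{2}|z|^{-2}\,dv_z,
\]
and the weight $|z|^{-2}$ lies precisely at the threshold of local square integrability in $\mathbb C^2$, so the H\"older argument that closed Theorem \ref{main3} breaks down exactly in this borderline $n=2$ situation. The resolution, which is where the dimensional restriction $n=2$ enters, should exploit the H\"older continuity of $u$ from the Sobolev embedding combined with the geometric flatness $|u(z)|=O(|z|^{m})$ for every $m$: the resulting decay $|u_w(\zeta)|\lesssim|\zeta|^{m}$ compensates for the singular weight $|z|^{-2}$, so that the product $V_w u_w$, which is what actually controls $\bar\partial u_w$ through the sliced inequality, lives in the right space on almost every slice. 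Once this critical integrability step is in place, the remainder of the argument parallels Theorem \ref{main3}.
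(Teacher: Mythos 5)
Your setup (translation to $z_0=0$, Sobolev embedding to get continuity, slicing via Lemma~\ref{cp}, regularity of slices via Lemma~\ref{cgo}, slicewise flatness via Lemma~\ref{fe}, and reassembly plus weak unique continuation at the end) matches the paper, and your diagnosis of the difficulty is exactly right: the polar formula puts the weight $|z|^{2n-2}$ next to $|V(w\zeta)|^2$, so the sliced potential $V_w$ is not in $L^2_{loc}(D_r)$ for a.e.\ direction, and Example~\ref{exhh} confirms this is a genuine obstruction. What is missing is a valid way to close the argument from there.

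The step where you ``exploit the H\"older continuity of $u$... combined with the geometric flatness... so that the product $V_w u_w$... lives in the right space'' is not a proof. Theorem~\ref{pz}.2 requires a \emph{potential} in $L^2_{loc}(D_r)$, not a bound on the product $V_w u_w$. If you try to manufacture a new potential from the flatness, the natural candidate $\widehat{V}_w := |\bar\partial u_w|/|u_w|$ satisfies $\widehat{V}_w \le V_w$ and so is no less singular; moreover, the flatness of $u_w$ makes $|u_w|$ small near $0$, which \emph{worsens}, not improves, any such quotient. Knowing $V_w u_w \in L^2$ on a slice tells you $\bar\partial u_w \in L^2$, which is already implied by $u_w \in W^{1,2}$ and does not give you a unique continuation conclusion by itself.

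The paper resolves the critical-integrability problem in a different and concrete way: instead of pushing the sliced potential into $L^2$, rewrite the sliced inequality as a \emph{hybrid}. Set $\tilde V(w) := |w|^{(n-1)/n} V(w\zeta)$; then Lemma~\ref{cp} applied to $|V|^{2n}$ shows $|w|^{2n-2}|V(w\zeta)|^{2n}$, i.e.\ $|\tilde V|^{2n}$, is integrable on $D_r$ for a.e.\ $\zeta$, so $\tilde V \in L^{2n}_{loc}(D_r)$. The sliced inequality becomes
\[
|\bar\partial v(w)| \le |w|^{-\frac{n-1}{n}}\,\tilde V(w)\,|v(w)|,\qquad w\in D_r,
\]
which for $n=2$ is precisely $|\bar\partial v|\le |w|^{-1/2}\tilde V |v|$ with $\tilde V \in L^4_{loc}$. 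One then invokes the one-dimensional hybrid-potential result Theorem~\ref{main7} (and Theorem~\ref{333} in the scalar case $N=1$), not Theorem~\ref{pz}. This is also exactly where the dimensional restriction $n=2$ enters: Theorem~\ref{main7} is stated only for the exponent $-1/2$ paired with $L^4_{loc}$, and the analogue for general $n\ge 3$, $N\ge 2$ (exponent $-(n-1)/n$ with $L^{2n}_{loc}$) is open --- see Question 2 of Remark~\ref{re}. Without invoking a hybrid-potential theorem, the slicing argument cannot be closed in the critical regime.
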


 We have to assume $u\in W_{loc}^{1, 2n+\epsilon}$ in both theorems above  to apply Lemma \ref{cgo} and Lemma \ref{fe}. This is essentially due to the failure of the boot-strap argument   to  improve the Sobolev regularity of  solutions  when $V\in L_{loc}^{2n}$. It would be desirable to know whether  this regularity assumption on $u$ can be weakened further.   

On the other hand, it is worth noting that for the Laplacian  $\Delta$, the unique continuation property for $W^{2,2}_{loc}(\Omega)$ solutions of the   inequality   \begin{equation*}  
    |\Delta u|\leq V|\nabla u|\ \ \text{on}\ \ \Omega \subset \mathbb R^d
\end{equation*}  
with $V\in L_{loc}^{d}(\Omega)$  holds when $d=2, 3, 4$, but  fails in general when $d\ge 5.$   See the works of Chanillo-Sawyer \cite{CS90} and Wolff  \cite{Wo90, Wo94}. In contrast, Theorem \ref{main5} shows that, in the critical   $V\in  L_{loc}^{\dim_{\mathbb R}\Omega}(\Omega)$ case,  the unique continuation property for $\bar\partial$   is dimension-independent, highlighting a significant difference from the case of $\Delta$.

  Finally,   we explore   a special case where $V $ is  a constant multiple of $\frac{1}{|z|}$.  Note that $\frac{1}{|z|}\notin L^{2n}_{loc}$ near $0$. Although as in the $L^{2n}_{loc}$ potential  case  the boot-strap argument does not improve the Sobolev regularity of   solutions  near $0$ in general, thanks to Lemma \ref{hhl}, the additional  flatness of $u$ at $0$ allows us to eventually push $u$  to fall in $ W_{loc}^{1, q}$ for all $q<\infty$. In view of this, the $N=1$ case in the following theorem is  simply a direct consequence of    \cite[Theorem 5.1]{PZ2}  for $W_{loc}^{1,2}$ solutions. For $N\ge 2$ case, one can use a similar approach as in  Theorem \ref{main3} to weaken the smoothness assumption of $u$ in \cite[Theorem 1.5]{PZ2} to merely $W_{loc}^{1,1}$ regularity.  It should also be mentioned that  when   $N\ge 2$, the unique continuation   property fails in general if $C$ is  large, see Example \ref{ex2}.

\begin{theorem}\label{main4}
    Let $\Omega$ be a  domain in $\mathbb C^n$ and $0\in \Omega$. Let $u: \Omega\rightarrow \mathbb C^N$ with $u\in W_{loc}^{1,1}(\Omega)$,  and  satisfy $ |\bar\partial u| \le \frac{C}{|z|}|u|$ a.e. on $\Omega$. Assume $u$  vanishes to infinite order in the $L^1$ sense at $0\in \Omega$.\\
    1). If $N=1$, then $u $ vanishes identically.  \\
    2). If $N\ge 2$ and $C< \frac{1}{4 }$, then $u$ vanishes identically.
\end{theorem}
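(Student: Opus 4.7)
The strategy is to handle the two cases separately, each time using the flatness of $u$ at $0$ to upgrade its Sobolev regularity to a level where an existing result of \cite{PZ2} applies. Although the potential $V=C/|z|$ fails to lie in $L_{loc}^{2n}$, so that the standard H\"older/Sobolev bootstrap breaks down, the hypothesis that $u$ vanishes to infinite order in the $L^1$ sense at $0$ compensates by making the singular factor $1/|z|$ effectively harmless against $u$. The common first step in both cases is therefore to invoke Lemma \ref{hhl} iteratively on shrinking neighborhoods of $0$, trading a portion of the infinite-order flatness for improved integrability at each stage, and thereby to conclude that $u\in W_{loc}^{1,q}$ near $0$ for every $q<\infty$. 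I expect this bootstrap step to be the main technical obstacle, since one must verify at each iteration that the supply of flatness is not exhausted faster than integrability is being gained, and that the improved flatness persists so that the iteration can continue indefinitely.

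For part 1) with $N=1$, once the bootstrap has produced $u\in W_{loc}^{1,2}$ in a neighborhood of $0$, Theorem 5.1 of \cite{PZ2} applies directly to the scalar solution of $|\bar\partial u|\le (C/|z|)|u|$ and yields $u\equiv 0$ near $0$. Standard connectedness arguments for unique continuation (for instance, the usual open-closed decomposition of the set of flat points) then propagate the vanishing to all of $\Omega$.

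For part 2) with $N\ge 2$ and $C<\frac{1}{4}$, the approach mimics the proof of Theorem \ref{main3} in order to weaken the smoothness hypothesis in \cite[Theorem 1.5]{PZ2} to merely $W_{loc}^{1,1}$ regularity. After the bootstrap has produced $u\in W_{loc}^{1,q}$ with $q>2n$ near $0$, I pass to complex polar coordinates via Lemma \ref{cp} and slice the problem along complex radial directions through $0$. Lemma \ref{cgo} then converts the sliced $u$, for almost every direction, into a $W_{loc}^{1,2}$ solution of a one-dimensional Schr\"odinger-type inequality of the form $|\partial_{\bar\zeta}\tilde u|\le (C/|\zeta|)|\tilde u|$, while Lemma \ref{fe} upgrades the $L^1$ infinite-order flatness of $u$ at $0$ to a stronger geometric flatness that descends onto the sliced functions. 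With these hypotheses verified on almost every slice, the one-dimensional unique continuation result Theorem \ref{pz} (where the smallness constraint $C<\frac{1}{4}$ enters through its spectral threshold) forces each slice to vanish identically; reassembling the slices yields $u\equiv 0$ on a neighborhood of $0$, and standard propagation again extends this to all of $\Omega$.
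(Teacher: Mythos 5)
Your overall architecture is correct and matches the paper: bootstrap via Lemma \ref{hhl} to upgrade $u$ to $W^{1,q}_{loc}$ for all $q<\infty$, then use Theorem \ref{33} directly for $N=1$, and slice via Lemmas \ref{cp}, \ref{cgo}, \ref{fe} for $N\ge 2$. However, there is a concrete error at the final step of part 2). You invoke Theorem \ref{pz} as the one-dimensional engine and claim ``the smallness constraint $C<\frac14$ enters through its spectral threshold,'' but Theorem \ref{pz} has no such threshold and, more importantly, cannot apply here: it requires the potential to lie in $L^2_{loc}$, whereas the sliced potential $\tilde V(w)=C/|w|$ on $D_r\subset\mathbb C$ satisfies $\int_{|w|<r}|w|^{-2}\,dv_w=\infty$ and hence is not in $L^2_{loc}$. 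The correct one-dimensional input is Theorem \ref{mm1} (i.e.\ \cite[Theorem 6.1]{PZ2}), which is specifically formulated for the potential $C/|z|$ with vector-valued $u$ and is where the hypothesis $C<\frac14$ actually enters. Substituting Theorem \ref{mm1} for Theorem \ref{pz} in your slicing step recovers the paper's argument.

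One smaller remark: you describe Lemma \ref{hhl} as something to be ``invoked iteratively on shrinking neighborhoods, trading flatness for integrability.'' In the paper, Lemma \ref{hhl} already packages the entire iteration (its proof bootstraps via Lemma \ref{22} and Lemma \ref{el}) and in a single invocation yields $u\in W^{1,q}_{loc}(\Omega)$ for every $q<\infty$, so no external iteration over neighborhoods is needed or performed; in particular the worry you raise about ``exhausting the supply of flatness'' does not arise, because Lemma \ref{22} shows infinite-order $L^1$ flatness is preserved under division by any power of $|z|$.
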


\section{Known results and examples}
In this section, we   list a few  known unique continuation properties for \eqref{eqn2}  that will be used in our paper, along with   counter-examples  for certain types of potentials. For clarification,  $u = (u_1, \ldots, u_N)\in  W^{1,1}_{loc}(\Omega)$ is said to satisfy the  inequality $ |\bar\partial u| \le V|u|$  a.e. on $\Omega$ if   \begin{equation*}  
     |\bar\partial u|: =\left(\sum_{j=1}^n\sum_{k=1}^N |\bar\partial_j u_k|^2\right)^{\frac{1}{2}}\le V \left(\sum_{k=1}^N|u_k|^2 \right)^{\frac{1}{2}}: = V|u| \ \  \text{a.e. on}\ \Omega. 
\end{equation*}
It is  worth pointing out that,  every $u\in W^{1,1}_{loc}(\Omega)$ satisfying $ |\bar\partial u| \le V|u|$ a.e. on $\Omega$ for some scalar function $V \in L^p_{loc}(\Omega)$  is  a weak solution to a Schr\"odinger-type equation of $\bar\partial$ below
\begin{equation*}\label{eqn3}
     \bar\partial u = u\mathcal V \ \ \text{on}\ \ \Omega
\end{equation*} 
for some $N\times N$ matrix-valued $(0, 1)$ form $\mathcal V = (\mathcal V_{jk})\in L^p_{loc}(\Omega)$, simply by letting $ \mathcal V_{jk}: = \frac{\overline{u_{j}}\bar\partial u_k}{|u|^2} $. 
\medskip

\begin{theorem}\cite{PZ}\label{pz}
Let $\Omega$ be a  domain in $\mathbb C^n$. Suppose $u: \Omega\rightarrow \mathbb C^N$ with $u\in W^{1, 2}_{loc}(\Omega)$,   and satisfies $ |\bar\partial u|\le V|u|$ a.e.\ on $\Omega$ for some $V\in L_{loc}^2(\Omega)$.\\ 
1). The weak unique continuation holds: if $u $ vanishes  in an open subset of $ \Omega$,   then $u$ vanishes identically.\\
 2). If $n=1$, then  the (strong) unique continuation holds: if $u$ vanishes to infinite order in the $L^2$ sense at some $z_0\in \Omega$, then $u$ vanishes identically.  
\end{theorem}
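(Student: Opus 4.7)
The plan is to prove part (2) as the substantive core and derive part (1) by a slicing reduction. First, the inequality $|\bar\partial u|\le V|u|$ is recast as a weak equation $\bar\partial u = u\mathcal V$ with $|\mathcal V|\le V\in L^2_{\mathrm{loc}}$ via $\mathcal V_{jk}:=\overline{u_j}\,\bar\partial u_k/|u|^2$ (zero where $u=0$), as noted in the excerpt.

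For part (2), the plan is a Carleman estimate with power weight centered at the flat point $z_0=0$:
\[
\big\|\,|z|^{-\tau-1}\varphi\,\big\|_{L^2(\mathbb C)} \le C\,\big\|\,|z|^{-\tau}\bar\partial\varphi\,\big\|_{L^2(\mathbb C)},
\]
valid for all $\varphi\in C_c^\infty(\mathbb C\setminus\{0\})^N$ and all $\tau$ in an unbounded set avoiding the discrete resonances $\tau\in\mathbb Z$, with $C$ uniform. Such an estimate follows by expanding $\varphi(re^{i\theta})=\sum_k\varphi_k(r)e^{ik\theta}$ and reducing to a mode-by-mode weighted Hardy inequality on $(0,\infty)$; the weight scaling matches $\bar\partial$ exactly, which is why the estimate closes at the critical $L^2$ integrability. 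To transfer it to the actual solution $u$, regularize by cutoffs $\chi_{\varepsilon,R}$ supported in $\varepsilon<|z|<R$: the infinite-order $L^2$ flatness of $u$ at $0$ kills the inner commutator as $\varepsilon\to 0$, and the outer commutator at $|z|=R$ is controlled by local $W^{1,2}$ norms. Substituting $|\bar\partial u|\le V|u|$ on a ball $B_R$ chosen so small that $\|V\|_{L^2(B_R)}^2$ is absorbable, then letting $\tau\to\infty$, forces $u\equiv 0$ on $B_{R/2}$; connectedness of $\Omega$ propagates this to $u\equiv 0$ on $\Omega$.

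For part (1), assume $u$ vanishes on an open set $U\subset\Omega$ and fix $z_0\in\Omega$. The plan is to slice by complex lines through $z_0$ that meet $U$. For a.e.\ direction $\xi\in\mathbb C^n\setminus\{0\}$ (in surface measure on the sphere), Fubini yields $v_\xi(t):=u(z_0+t\xi)\in W^{1,2}_{\mathrm{loc}}$ on its parameter interval, satisfying the one-dimensional sliced inequality $|\bar\partial_t v_\xi|\le V_\xi|v_\xi|$ with $V_\xi\in L^2_{\mathrm{loc}}$. For generic such $\xi$, the line meets $U$ in an open arc where $v_\xi\equiv 0$, hence vanishes to infinite order; part (2) gives $v_\xi\equiv 0$, so $u(z_0)=0$.

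The main obstacle is the critical $L^2$ scaling. The weight $|z|^{-\tau}$ paired with $\bar\partial$ is borderline, and the Carleman estimate degenerates at resonant integers $\tau$ (corresponding to holomorphic monomials $z^k$ in $\ker\bar\partial$); these must be navigated carefully, for example by restricting $\tau$ to half-integers or by projecting off finitely many angular Fourier modes. A secondary subtlety is justifying the cutoff approximation near $0$ using only $L^2$ flatness rather than pointwise flatness, which is where the $W^{1,2}_{\mathrm{loc}}$ hypothesis on $u$ becomes essential.
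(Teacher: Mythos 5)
Theorem \ref{pz} is quoted from the earlier paper \cite{PZ} and is not proved in the present paper, so there is no internal proof to compare against; I can only assess your argument on its own merits, and it has two genuine gaps.

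For part (2), the Carleman estimate $\||z|^{-\tau-1}\varphi\|_{L^2}\le C\,\||z|^{-\tau}\bar\partial\varphi\|_{L^2}$ with $C$ uniform over half-integers $\tau$ is correct, and the cutoff/flatness bookkeeping is standard. The gap is the absorption of the potential term. After writing $\bar\partial(\chi u)=\chi\bar\partial u+(\bar\partial\chi)u$ and applying the estimate, you need
$\||z|^{-\tau}V\chi u\|_{L^2}=\|(|z|V)\cdot|z|^{-\tau-1}\chi u\|_{L^2}$
to be absorbable into $\||z|^{-\tau-1}\chi u\|_{L^2}$. Since $V\in L^2_{\mathrm{loc}}$ only, H\"older would require $L^\infty$ control of the weighted function; making $\|V\|_{L^2(B_R)}$ small does not help because the Carleman constant does not grow with $\tau$ (it is $\dist(\tau,\mathbb Z)^{-1}$), and even augmenting the estimate with the weighted gradient term and applying Sobolev on dyadic annuli produces, in two real dimensions, at best $L^q$ control for all finite $q$ but not $L^\infty$ — exactly the critical integrability where the scaling fails to close. (It closes for $V\in L^{2+\epsilon}_{\mathrm{loc}}$, which is why the subcritical case is classical.) So ``chosen so small that $\|V\|_{L^2(B_R)}^2$ is absorbable'' is not justified; the critical case needs an additional ingredient (e.g.\ similarity principle with Moser--Trudinger exponential integrability for $N=1$, or a genuinely different mechanism in the vector case).

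For part (1), the slicing step is incorrect as stated. Restricting $u\in W^{1,2}_{\mathrm{loc}}(\mathbb C^n)$ to a complex line through an arbitrary $z_0$ does not produce a $W^{1,2}$ function of one complex variable when $n\ge 2$: the complex polar Jacobian $|w|^{2n-2}$ degenerates at $w=0$, and, as the paper's own Corollary \ref{coo} and Lemma \ref{cgo} make explicit, slicing $L^p$ gives only $L^q$ for $q<p/n$, so $p>2n$ is needed to obtain $W^{1,2}$ slices. With $p=2$ and $n\ge 2$ you get less than $L^1$ near $w=0$, so $v_\xi\notin W^{1,2}_{\mathrm{loc}}$ and part (2) cannot be applied. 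The same degeneracy affects $V_\xi$. Moreover $u(z_0)$ has no pointwise meaning for a $W^{1,2}$ function in real dimension $\ge 3$. The repair is to slice from a point $z_1$ \emph{inside} the open set $U$ where $u$ vanishes: then every slice through $z_1$ is identically zero near $w=0$, so the degenerate weight is harmless, the slice is $W^{1,2}(D_r)$ with $L^2$ sliced potential away from $0$, and part (2) applies at the origin of the slice. One concludes $u\equiv 0$ on a full ball about $z_1$ via Lemma \ref{cp}, and then shows the interior of $\{u=0\}$ is both open and closed in $\Omega$.
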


The unique continuation to \eqref{eqn2} fails in general if the potential does not belong to $L_{loc}^{2n}$. 

\begin{example}\label{pf}
For each $1\le p<2n$, let $\epsilon  \in (0, \frac{2n}{p}-1)$ and consider 
$$  |\bar\partial u| \le V|u|: = \frac{\epsilon }{2 |z|^{\epsilon+1}}|u|\ \ \text{on}\ \ B_1\subset \mathbb C^n.$$
 Note that $ V \in L^p(B_1)$, and $u_0= e^{-\frac{1}{|z|^\epsilon}}$ is a nontrivial smooth solution to the above equation that  vanishes to infinite order  at $0$. 

\end{example}

Despite Example \ref{pf}, the unique continuation property can still be expected for some special forms of potentials not in $L_{loc}^{2n}$, for instance, when the potential is a multiple of $\frac{1}{|z|}$.

\begin{theorem}\cite[Theorem 5.1]{PZ2}\label{33}
Let $\Omega$ be a   domain in $\mathbb C^n$ and $0\in \Omega$. Let $u: \Omega\rightarrow \mathbb C$ with $u\in  W^{1, 2}_{loc}(\Omega)$, and satisfies $|\bar\partial u|\le \frac{C}{|z|}|u|$ a.e. on $\Omega$ for some constant $C>0$. If $u$ vanishes to infinite order  in the $L^2$ sense at $0$, then $u $ vanishes identically. 
\end{theorem}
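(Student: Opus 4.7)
The plan is to reduce the multivariable problem to a one-dimensional scalar unique continuation problem via complex polar coordinates. Since the potential $\frac{C}{|z|}$ does not lie in $L^{2n}_{\mathrm{loc}}$, the usual elliptic bootstrap is unavailable, but its homogeneous scaling combined with the assumed flatness of $u$ at the origin allows an iterative upgrade of the Sobolev regularity of $u$ near $0$. I would first use Lemma \ref{hhl} to show that if $u\in W^{1,q_0}_{\mathrm{loc}}$ near $0$ and $u$ is $L^1$-flat at $0$, then $u\in W^{1,q_1}_{\mathrm{loc}}$ near $0$ for some $q_1>q_0$; iterating, $u\in W^{1,q}_{\mathrm{loc}}$ for every finite $q$.

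With this improved regularity in hand, invoke the complex polar coordinate formula (Lemma \ref{cp}) and slice: for each unit vector $w\in S^{2n-1}$, set $u_w(\zeta):=u(\zeta w)$, which by the chain rule satisfies $|\bar\partial_\zeta u_w(\zeta)|\le \tfrac{C}{|\zeta|}|u_w(\zeta)|$ on a disk in $\mathbb{C}$. Lemma \ref{cgo} gives, for almost every $w$, a $W^{1,2}_{\mathrm{loc}}$ slice $u_w$ satisfying this inequality, and Lemma \ref{fe} promotes the $L^1$ flatness of $u$ at $0$ to a geometric flatness of $u_w$ at $\zeta=0$ (in the $L^2$ sense) for almost every $w$.

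Now exploit the scalar structure $N=1$. On a small disk set $\mathcal V_w := \bar\partial u_w/u_w$ on $\{u_w\neq 0\}$ and zero elsewhere, so that $|\mathcal V_w|\le \tfrac{C}{|\zeta|}$ and $\bar\partial u_w=\mathcal V_w\,u_w$. Using the Cauchy--Green operator, construct $\phi_w$ with $\bar\partial\phi_w=\mathcal V_w$ and show that $e^{\pm\phi_w}\in L^p$ on the disk for every $p<\infty$; the angular cancellation $\int_0^{2\pi} e^{-i\theta}d\theta=0$ hidden in $\bar\partial^{-1}(|\zeta|^{-1})$ keeps $\phi_w$ only logarithmically singular at $0$, which is a BMO-type bound compatible with exponential integrability. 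Then $v_w:=u_w e^{-\phi_w}$ is holomorphic on the disk; the $L^2$ flatness of $u_w$ combined with the $L^p$ integrability of $e^{-\phi_w}$ transfers flatness to $v_w$, and for a holomorphic function on a disk flatness at $0$ forces $v_w\equiv 0$. Hence $u_w\equiv 0$.

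Since $u_w\equiv 0$ for almost every $w$, Fubini in polar coordinates yields $u\equiv 0$ on a ball around $0$; the weak unique continuation of Theorem \ref{pz}(1) then propagates vanishing throughout $\Omega$. The main obstacle is the one-dimensional scalar reduction, namely the quantitative control of $\phi_w$ that allows the flatness of $u_w$ to transfer to $v_w$ despite the non-$L^2$ singularity of $\mathcal V_w$ at the origin. If that route proves too delicate, a viable alternative is to bypass the substitution $u=e^\phi v$ and adapt a Carleman estimate tuned to the potential $\frac{C}{|\zeta|}$ on the disk, applied directly to the slice $u_w$ to derive the desired contradiction from its infinite-order vanishing.
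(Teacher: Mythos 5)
This statement is not proved in the present paper; it is imported verbatim from \cite[Theorem 5.1]{PZ2} and used as a black box (e.g.\ in the proof of Theorem \ref{main4}, $N=1$ case). So there is no in-paper proof to compare against, and your attempt should be judged as a free-standing re-derivation. Your overall strategy, namely bootstrap the regularity via Lemma \ref{hhl}, slice along complex radial directions via Lemmas \ref{cp}, \ref{cgo}, \ref{fe}, and then settle the resulting one-variable scalar problem by a similarity-principle factorization $u_w = e^{\phi_w} v_w$ with $v_w$ holomorphic, is indeed in the spirit of what this paper does for its main theorems, and is a legitimate route.

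There is, however, a genuine gap in the one-dimensional step. You claim that since $|\mathcal V_w|\le C/|\zeta|$, the solution $\phi_w$ of $\bar\partial\phi_w=\mathcal V_w$ obtained from the Cauchy--Green operator satisfies $e^{\pm\phi_w}\in L^p$ for \emph{every} $p<\infty$, invoking a BMO/exponential-integrability heuristic. This is false for general $C$. The Cauchy transform of a function bounded by $C/|\zeta|$ on a disk satisfies only $|\phi_w(\zeta)|\lesssim C\log(1/|\zeta|)+O(1)$ (e.g.\ splitting the convolution integral into $|w|<|\zeta|/2$, $|w-\zeta|<|\zeta|/2$, and the rest), so $e^{\pm\phi_w}$ blows up like $|\zeta|^{-C'}$ with $C'$ comparable to $C$, and is in $L^p$ near the origin only for $p<2/C'$. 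John--Nirenberg likewise gives $e^{c|\phi_w|}\in L^1_{loc}$ only for $c$ small relative to the BMO norm, which here grows with $C$; it does not yield all $p<\infty$. Because the theorem is claimed for arbitrary $C>0$, the $L^p$-for-all-$p$ assertion cannot be the mechanism.

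The gap is repairable, and it is worth flagging what the repair actually uses: it is precisely the infinite-order flatness that rescues the argument, not the integrability of $e^{\pm\phi_w}$. Once one has the polynomial bound $|e^{-\phi_w}(\zeta)|\lesssim |\zeta|^{-C'}$ near $0$ and the geometric flatness $|u_w(\zeta)|=O(|\zeta|^m)$ for every $m$ from Lemma \ref{fe}, the product $v_w=u_w e^{-\phi_w}$ still vanishes to infinite order at $0$, since $m$ can be taken larger than any fixed $C'$. One must also verify that $v_w$ is $L^1_{loc}$ (this follows from $u_w\in L^q$ for all finite $q$ and $e^{-\phi_w}\in L^{p_0}$ for some $p_0>1$) and that $\bar\partial v_w=0$ holds distributionally; the distributional Leibniz rule here requires some care because $e^{-\phi_w}$ is unbounded and $\nabla\phi_w$ is only in $L^p$ for $p<2$, so this step should not be passed over as obvious. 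Your fallback of a Carleman estimate tailored to $C/|\zeta|$ is of course also viable, and may be closer to what \cite{PZ2} actually does; but as written, the proposal needs the $L^p$ claim corrected and the holomorphy of $v_w$ justified before it can be considered complete.
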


\begin{theorem}\cite[Theorem 6.1]{PZ2}\label{mm1}
Let $\Omega$ be a  domain in $\mathbb C$ and $0\in \Omega$. Let  $u: \Omega\rightarrow \mathbb C^N$ with $u\in W^{1, 2}_{loc}(\Omega)$, and  satisfy $|\bar\partial u|\le \frac{C}{|z|}|u| $ a.e. on $\Omega$ for some positive constant $C<\frac{1}{4 }$. If $u$ vanishes to infinite order in the $L^2$ sense  at  $0$, then $u $ vanishes identically.
\end{theorem}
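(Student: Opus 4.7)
The plan is to derive a weighted $L^2$ Hardy-type inequality for $\bar\partial$ on a small disk around the origin with a sharp constant involving the distance $d(\alpha):=\mathrm{dist}(\alpha,\mathbb Z)$, apply it to $v=\chi u$ for a cutoff $\chi$, and combine with the Schr\"odinger-type bound $|\bar\partial u|\le (C/|z|)|u|$ and the $L^2$ flatness of $u$ at $0$ to conclude that $u$ vanishes on a neighborhood of the origin. The weak unique continuation in Theorem \ref{pz}(1) then propagates this to all of $\Omega$.

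The first step establishes
\begin{equation*}
    \int_{B_R} |v|^2 |z|^{-2\alpha-2}\, dA \;\le\; \frac{4}{d(\alpha)^2}\int_{B_R} |\bar\partial v|^2 |z|^{-2\alpha}\, dA
\end{equation*}
for $v\in C_c^\infty(B_R\setminus\{0\})$ and $\alpha\in\mathbb{R}\setminus\mathbb{Z}$. Writing $v=\sum_m f_m(r)e^{im\theta}$, so that $\bar\partial$ acts mode by mode via $f_m\mapsto \tfrac12(f_m'-\tfrac{m}{r}f_m)\, e^{i(m+1)\theta}$, the substitution $f_m=r^\alpha g$ together with one integration by parts yields, for each $m$,
\begin{equation*}
    \int_0^R |f_m|^2 r^{-2\alpha-1}\, dr \;\le\; \frac{1}{(\alpha-m)^2}\int_0^R \bigl|f_m'-\tfrac{m}{r}f_m\bigr|^2 r^{1-2\alpha}\, dr.
\end{equation*}
Summing over $m$ and using $\min_m(\alpha-m)^2=d(\alpha)^2$ gives the inequality; the sharp constant $16$ is realized at $\alpha=k+\tfrac12$ for integer $k$.

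In the second step I take $\chi$ smooth with $\chi\equiv 1$ on $B_{R/2}$, supported in $B_R$, with $|\bar\partial\chi|\le C_0/R$, and apply the Hardy inequality to $v=\chi u$. The $L^2$ flatness of $u$ at $0$ makes both sides finite and, via a truncation $v_\delta=(1-\eta_\delta)v$ near $0$ with $\delta\to 0$ (whose truncation error is controlled because $\int_{B_{2\delta}}|u|^2\, dA=o(\delta^m)$ for all $m$), the inequality extends from test functions to $v$. Expanding $\bar\partial v=\chi\bar\partial u+u\bar\partial\chi$ with Cauchy--Schwarz in parameter $\epsilon>0$, then using $|\bar\partial u|\le (C/|z|)|u|$ and $\chi^2|u|^2=|v|^2$, and absorbing the leading term to the left give, with $\alpha=k+\tfrac12$,
\begin{equation*}
    \bigl(1-16(1+\epsilon)C^2\bigr)\int_{B_{R/2}}|u|^2|z|^{-2\alpha-2}\, dA \;\le\; \frac{16\,C_0^2(1+1/\epsilon)}{R^2}\,(R/2)^{-2\alpha}\int_{R/2<|z|<R}|u|^2\, dA.
\end{equation*}
Since $C<\tfrac14$, choose $\epsilon$ so that $16(1+\epsilon)C^2<1$; the coefficient on the left is then strictly positive.

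Letting $\alpha=k+\tfrac12\to\infty$: for any fixed $0<r<R/2$, lower-bounding the left side by $r^{-2\alpha-2}\int_{r/2<|z|<r}|u|^2\, dA$ and rearranging gives
\begin{equation*}
    \int_{r/2<|z|<r}|u|^2\, dA \;\le\; \mathrm{const}\cdot r^2\,(2r/R)^{2\alpha}\int_{R/2<|z|<R}|u|^2\, dA,
\end{equation*}
whose right-hand side vanishes in the limit because $2r/R<1$. Hence $u=0$ a.e.\ on every annulus $r/2<|z|<r$ with $r<R/2$, so $u=0$ a.e.\ on $B_{R/2}$, and Theorem \ref{pz}(1) gives $u\equiv 0$ on $\Omega$. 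I expect the main obstacle to be the rigorous passage of the Hardy inequality from $C_c^\infty(B_R\setminus\{0\})$ to $v=\chi u\in W^{1,2}_c(B_R)$, where the $L^2$ flatness at the origin is essential in killing the truncation errors; the threshold $C<\tfrac14$ then reflects the sharp Hardy constant $16$ achieved at half-integer weights.
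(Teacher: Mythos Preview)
The present paper does not prove this statement; Theorem~\ref{mm1} is quoted from \cite[Theorem~6.1]{PZ2} as a known input and is only \emph{used} (in the proof of Theorem~\ref{main4}). So there is no ``paper's own proof'' to compare against here.

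That said, your argument is sound and is the natural Carleman-estimate proof one expects for this statement. The Fourier-mode computation for the weighted $\bar\partial$ Hardy inequality is correct: with $f_m=r^\alpha g$ one gets
\[
\int_0^R\bigl|f_m'-\tfrac{m}{r}f_m\bigr|^2 r^{1-2\alpha}\,dr
=(\alpha-m)^2\int_0^R|g|^2 r^{-1}\,dr+\int_0^R r|g'|^2\,dr
\ \ge\ (\alpha-m)^2\int_0^R|f_m|^2 r^{-2\alpha-1}\,dr,
\]
the boundary term vanishing for compactly supported $f_m$; summing over $m$ and accounting for the factor $\tfrac14$ in $|\bar\partial v|^2$ yields exactly $4/d(\alpha)^2$, hence $16$ at half-integers. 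The absorption step and the $\alpha\to\infty$ limit are standard and correctly carried out, and the threshold $C<\tfrac14$ is precisely $16C^2<1$.

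The one place that genuinely needs care---and you flag it---is passing the Carleman inequality from $C_c^\infty(B_R\setminus\{0\})$ to $v=\chi u$. Your plan is fine: the $L^2$ flatness gives $\int_{B_R}|u|^2|z|^{-M}\,dA<\infty$ for every $M$ by a dyadic sum (cf.\ Lemma~\ref{22} in this paper, in $L^1$ form), so both weighted integrals are finite; then with $v_\delta=(1-\eta_\delta)v$ the extra term $v\,\bar\partial\eta_\delta$ contributes at most $C\delta^{-2-2\alpha}\int_{B_{2\delta}}|u|^2=o(1)$, while the remaining pieces converge by dominated convergence. One minor remark: the inequality is being applied componentwise to the $\mathbb C^N$-valued $u$, which is harmless since the Hardy estimate is linear in $|v|^2$ and $|\bar\partial v|^2$.
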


In particular, when $N\ge 2$ and  $C$ is  large, the above unique continuation  property  no longer holds in general,  as indicated by an example   below  of the first author and  Wolff  \cite{PW98}, or \cite{AB94} by Alinhac and Baouendi. 

\begin{example}\label{ex2}
    Let   $v_0: \mathbb C\rightarrow \mathbb C$ be the  nontrivial smooth scalar function constructed in \cite{PW98} that vanishes to infinite order at $0$ and  satisfies $|\triangle v_0|\le \frac{C^\sharp}{|z|}|\nabla v_0|$  on $\mathbb C$ for some constant $C^\sharp>0 $. Letting  $u_0: = ( \partial \Re v_0, \partial \Im  v_0)$, then $u_0: \mathbb C\rightarrow \mathbb C^2$ is smooth,  vanishes to infinite order  at $0$, and  satisfies $|\bar\partial u_0|\le \frac{C^\sharp}{2|z|}|  u_0|$ on $\mathbb C$. 
\end{example}

\medskip

In the  case when the source dimension $n=1$, the unique continuation property  holds even when the potentials take on the following   hybrid forms involving  both   powers  of $\frac{1}{|z|}$ and    Lebesgue   functions. Note that none of these potentials  below  belongs to $L^2_{loc}$.

\begin{theorem}\cite[Theorem 5.5]{PZ2}\label{333}
Let $\Omega$ be a   domain in $\mathbb C$ containing $0$ and $1 < \beta < \infty$. Suppose $u: \Omega\rightarrow \mathbb C$ with $u\in  W^{1, 2}_{loc}(\Omega)$, and satisfies 
$$|\bar\partial u|\le   |z|^{-\frac{\beta-1}{\beta}}  V |u|\ \ \  \text{a.e.  on}\ \ \Omega   $$
 for some $   V\in L^{2\beta}_{loc}(\Omega)$. If $u$ vanishes to infinite order in the $L^2$ sense at $0$, then $u $ vanishes identically. 
\end{theorem}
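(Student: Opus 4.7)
The plan is to reduce the theorem to the classical fact that a holomorphic function vanishing to infinite order at an interior point must be identically zero. Concretely, I would construct an integrating factor $\psi$ so that $v:=e^{-\psi}u$ is holomorphic, then transfer the infinite-order flatness from $u$ to $v$. It suffices to prove $u\equiv 0$ on a small disk $B_R\subset \Omega$ around $0$, since the conclusion on all of $\Omega$ then follows from the weak unique continuation of Theorem \ref{pz}(1) applied on the connected open set $\Omega\setminus\{0\}$, where the potential $|z|^{-(\beta-1)/\beta}V$ is locally in $L^2$.

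Since $N=1$, the inequality can be rewritten as $\bar\partial u=\mathcal V u$ a.e., with the measurable coefficient $\mathcal V:=\chi_{\{u\ne 0\}}\bar\partial u/u$ satisfying $|\mathcal V(z)|\le |z|^{-(\beta-1)/\beta}V(z)$. Set $\psi:=T\mathcal V$ on $B_R$, where $T$ denotes the local Cauchy--Pompeiu transform, so that $\bar\partial\psi=\mathcal V$ a.e.\ and $\psi\in W^{1,r}_{\mathrm{loc}}$ for every $r<2$. The key pointwise estimate I would establish is
\[
|\psi(z)|\ \le\ C\bigl(1+|\log|z||\bigr)^{(2\beta-1)/(2\beta)},
\]
obtained by applying H\"older's inequality with exponents $2\beta$ and $2\beta/(2\beta-1)$ to the Cauchy integrand $V(\zeta)/(|\zeta|^{(\beta-1)/\beta}|z-\zeta|)$ and exploiting the arithmetic identity $\tfrac{\beta-1}{\beta}\cdot\tfrac{2\beta}{2\beta-1}+\tfrac{2\beta}{2\beta-1}=2$. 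This makes the resulting weighted kernel $|\zeta|^{-a}|z-\zeta|^{-b}$ scale-critical ($a+b=2$), so a three-annulus decomposition around $\zeta=0$ and $\zeta=z$ yields an $O(1)$ inner contribution and an $O(\log(1/|z|))$ outer contribution. Since $(2\beta-1)/(2\beta)<1$, this forces $|e^{\pm\psi(z)}|=o(|z|^{-\varepsilon})$ for every $\varepsilon>0$.

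With this in hand, define $v:=e^{-\psi}u$. The product/chain rule---justified on $B_R\setminus\{0\}$ by the local boundedness of $\psi$ there, and then extended to $B_R$ by testing against $C_c^\infty(B_R)$ using $v\in L^1_{\mathrm{loc}}$---gives $\bar\partial v=e^{-\psi}(\bar\partial u-\mathcal V u)=0$ in the distributional sense on $B_R$. Weyl's lemma then makes $v$ holomorphic on $B_R$. Combining the sub-polynomial growth of $e^{-\psi}$ with the infinite-order $L^2$-flatness of $u$ and a standard dyadic decomposition, we obtain $\int_{|z|<r}|v|^2\,dA_z=o(r^m)$ for every $m\ge 1$; a Taylor-series expansion forces $v\equiv 0$, hence $u\equiv 0$ on $B_R$.

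The principal obstacle is the sub-logarithmic bound on $\psi$: the product rule, Weyl's lemma, the dyadic flatness transfer and the final power-series vanishing are all standard, but the estimate on $\psi$ must be \emph{sharp}. A merely logarithmic bound $|\psi(z)|\lesssim|\log|z||$ would allow $e^{-\psi}$ to grow polynomially and destroy the flatness transfer from $u$ to $v$; it is precisely the scale-critical identity $a+b=2$ together with the strict inequality $(2\beta-1)/(2\beta)<1$ that make the argument go through.
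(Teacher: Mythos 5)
The paper does not actually prove Theorem~\ref{333}; it is quoted verbatim from \cite[Theorem~5.5]{PZ2}, so there is no in-paper proof to compare against. Evaluating your argument on its own merits, it is correct and is the natural integrating-factor proof of a one-dimensional $\bar\partial$-unique-continuation result of this type. The arithmetic is right: with $a=\tfrac{\beta-1}{\beta}$ and the conjugate exponent $q=\tfrac{2\beta}{2\beta-1}$ one has $aq<2$, $q<2$ (here $\beta>1$ is used essentially, otherwise the local singularity $|\zeta-z|^{-q}$ is not integrable) and $aq+q=2$, so the H\"older dual integral $\int_{B_R}|\zeta|^{-aq}|\zeta-z|^{-q}\,dA_\zeta$ is $O(1)$ near both $\zeta=0$ and $\zeta=z$ and contributes a single logarithm from the outer annulus, yielding $|\psi(z)|\lesssim (1+\log\tfrac{1}{|z|})^{1/q}$ with $1/q=\tfrac{2\beta-1}{2\beta}<1$, hence $|e^{\pm\psi}|=o(|z|^{-\varepsilon})$ for all $\varepsilon>0$. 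The subsequent steps are routine and you have flagged the only points needing care: the Leibniz rule for $v=e^{-\psi}u$ holds on $B_R\setminus\{0\}$ since $\psi$ is H\"older there (the local part of $\mathcal V$ is $L^{2\beta}$ with $2\beta>2$); the resulting distributional equation $\bar\partial v=0$ together with $v\in L^2_{\mathrm{loc}}$ near $0$ (a consequence of the dyadic flatness transfer) removes the singularity at $0$; and the same flatness transfer kills the Taylor coefficients of the resulting holomorphic $v$. Passing from $u\equiv 0$ on $B_R$ to $u\equiv 0$ on $\Omega$ via weak unique continuation on the connected set $\Omega\setminus\{0\}$ is also correctly handled, since the potential is locally $L^2$ there. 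In short, the proposal is sound; the scale-critical identity $aq+q=2$ and the strict inequality $\tfrac{2\beta-1}{2\beta}<1$ are precisely what make the method work, and absent access to \cite{PZ2} there is no visible daylight between your argument and what the cited proof must be doing.
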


\begin{theorem}\cite[Theorem 7.1]{PZ2}\label{main7}
      Let $\Omega$ be a  domain in $\mathbb C$ and $0\in \Omega$. Suppose   $u: \Omega\rightarrow \mathbb C^N$  with $u\in W^{1, 2}_{loc}(\Omega)$, and    satisfies \begin{equation*}
          |\bar\partial u| \le {|z|^{-\frac{1}{2}}} V |u| \ \ \text{a.e. on}\ \ \Omega 
      \end{equation*}    for some    $V \in L_{loc}^{4}(\Omega)$.  If $u$ vanishes to infinite order in the $L^2$ sense at $0$, then $u$ vanishes identically.
\end{theorem}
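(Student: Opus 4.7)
The plan is to prove Theorem~\ref{main7} by establishing a weighted Carleman estimate for $\bar\partial$ in one complex dimension with the critical weight $|z|^{-\tau}$, followed by an absorption argument and a two-parameter limit that exploits the $L^{2}$ flatness of $u$ at the origin.

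\textbf{Step 1 (Carleman estimate).} The analytic core is the scale-invariant estimate
\begin{equation*}
\bigl\|\,|z|^{-\tau}f\,\bigr\|_{L^{4}(\mathbb C)} \le C\,\bigl\|\,|z|^{-\tau+\tfrac12}\bar\partial f\,\bigr\|_{L^{2}(\mathbb C)},\qquad f \in C_c^{\infty}(\mathbb C \setminus \{0\}),
\end{equation*}
with constant $C$ uniform along a sequence $\tau_j\to\infty$ of non-resonant values. Via $f=T(\bar\partial f)$ for the Cauchy--Green transform $T$, this amounts to the $L^{2}\!\to\! L^{4}$ boundedness of the rotation-invariant operator $g\mapsto |z|^{-\tau}T(|w|^{\tau-1/2}g)$, whose kernel is homogeneous of degree $-3/2$---precisely the scaling of the fractional integral $I_{1/2}$ on $\mathbb R^{2}$ (so that $\tfrac14=\tfrac12-\tfrac14$). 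Expanding $f=\sum_{k}g_{k}(r)e^{ik\theta}$ in angular modes and substituting $r=e^{\rho}$ reduces the inequality mode-by-mode to the 1D bound $\|G_k\|_{L^{4}(\mathbb R)}\le C|\alpha_k|^{-3/4}\|(\partial_\rho+\alpha_k)G_k\|_{L^{2}(\mathbb R)}$ with $\alpha_k=\tau-\tfrac12-k$, which follows from Young's inequality applied to convolution with $e^{-\alpha_ks}\mathbf{1}_{s\ge 0}$. Choosing $\tau_{j}$ at fixed distance from $\mathbb Z+\tfrac12$ forces $\min_{k}|\alpha_{k}|$ bounded below, yielding uniform mode-wise constants; the 2D bound is reassembled via Plancherel in $\theta$ together with a Littlewood--Paley square-function argument.

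\textbf{Step 2 (Vector-valued version).} Applying Step~1 componentwise and using $\|(\sum_{k}|u_{k}|^{2})^{1/2}\|_{L^{4}}\le(\sum_{k}\|u_{k}\|_{L^{4}}^{2})^{1/2}$ on the left together with the identity $\|(\sum_{k}|\bar\partial u_{k}|^{2})^{1/2}\|_{L^{2}}^{2}=\sum_{k}\|\bar\partial u_{k}\|_{L^{2}}^{2}$ on the right gives
\begin{equation*}
\bigl\|\,|z|^{-\tau}|u|\,\bigr\|_{L^{4}}\le C\,\bigl\|\,|z|^{-\tau+\tfrac12}|\bar\partial u|\,\bigr\|_{L^{2}}.
\end{equation*}

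\textbf{Step 3 (Absorption and limits).} Fix $B_{R}\Subset\Omega$ and a smooth cutoff $\chi$ with $\chi\equiv 1$ on $B_{R/2}\setminus B_{2r}$ and $\mathrm{supp}\,\chi\subset B_{R}\setminus\overline{B_{r}}$. Applying Step~2 to $u\chi$, substituting $|\bar\partial u|\le|z|^{-1/2}V|u|$, and using H\"older's inequality with exponents $(4,4)$ inside $L^{2}$,
\begin{equation*}
\bigl\|\,|z|^{-\tau}|u|\chi\,\bigr\|_{L^{4}}\le C\|V\|_{L^{4}(B_{R})}\bigl\|\,|z|^{-\tau}|u|\chi\,\bigr\|_{L^{4}}+C\bigl\|\,|z|^{-\tau+\tfrac12}|u|\,|\bar\partial\chi|\,\bigr\|_{L^{2}}.
\end{equation*}
Shrink $R$ so that $C\|V\|_{L^{4}(B_{R})}\le\tfrac12$ and absorb. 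The $\bar\partial\chi$-term splits into an inner piece bounded by $Cr^{-\tau-1/2}\|u\|_{L^{2}(B_{2r})}$, vanishing as $r\to 0$ by the $L^{2}$ flatness, and an outer piece bounded by $C2^{\tau}R^{-\tau-1/2}\|u\|_{L^{2}(B_{R})}$. Since $|z|^{-\tau}\ge(R/4)^{-\tau}=4^{\tau}R^{-\tau}$ on $B_{R/4}$,
\begin{equation*}
\|u\|_{L^{4}(B_{R/4})}\le CR^{-1/2}\cdot 2^{-\tau_{j}}\|u\|_{L^{2}(B_{R})}\xrightarrow{j\to\infty}0,
\end{equation*}
so $u\equiv 0$ on $B_{R/4}$. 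The weak unique continuation of Theorem~\ref{pz}(1) then propagates this vanishing to all of $\Omega$.

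\textbf{Main obstacle.} The principal difficulty is Step~1: securing a $\tau$-uniform Carleman constant in the presence of the critical weight $|z|^{-1/2}$. Two obstructions arise: (i) the mode-wise constant $|\alpha_{k}|^{-3/4}$ blows up at the resonances $\alpha_{k}=0$, which occur whenever $\tau\in\mathbb Z+\tfrac12$, so one must restrict to a sequence $\tau_{j}\to\infty$ at fixed distance from this lattice; (ii) passing from the mode-by-mode bound to the full $L^{4}(\mathbb C)$ estimate requires a Littlewood--Paley or square-function reassembly that loses no powers of $\tau$, closely paralleling the scalar $\beta=2$ case underlying Theorem~\ref{333} in \cite{PZ2}.
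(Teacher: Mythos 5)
Your proposal takes a Carleman-estimate route, which is a genuinely different strategy from the one used in \cite{PZ2} (which proceeds through the Cauchy transform and a similarity-principle factorization rather than weighted inequalities). In principle a uniform $L^{2}\to L^{4}$ Carleman estimate for $\bar\partial$ would indeed yield the theorem: Steps~2 and~3 are correctly executed — the Minkowski/$\ell^2\hookrightarrow L^4$ manipulation in Step~2 is sound, and the absorption, cutoff splitting, and $r\to 0$, $\tau_j\to\infty$ limits in Step~3 are carried out correctly.

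The difficulty is that Step~1 is not a proof; it is a sketch of where the proof would need to go, and the essential step is missing. Young's inequality gives the one-dimensional bound $\|G_k\|_{L^4(\mathbb R)} \lesssim |\alpha_k|^{-3/4}\|(\partial_\rho + \alpha_k)G_k\|_{L^2(\mathbb R)}$ on each angular mode, but there is no Plancherel identity in $L^4$: the quantity $\int_0^{2\pi} |f(re^{i\theta})|^4\, d\theta$ does not reduce to $\sum_k |g_k(r)|^4$ or to $(\sum_k |g_k(r)|^2)^2$, and in general $\|f\|_{L^4(\mathbb C)}$ cannot be controlled by $(\sum_k \|g_k\|^2)^{1/2}$. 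The ``Littlewood--Paley square-function reassembly that loses no powers of $\tau$'' is exactly the hard analytic content of a uniform $L^p\to L^q$ Carleman estimate (this is what makes Jerison's and Koch--Tataru's arguments nontrivial), and you explicitly defer it rather than supply it. Moreover the appeal to the homogeneity of the kernel $|z|^{-\tau}|z-w|^{-1}|w|^{\tau - 1/2}$ of degree $-3/2$ is only heuristic: this kernel is \emph{not} pointwise dominated by a $\tau$-independent multiple of $|z-w|^{-3/2}$ (it blows up like $(|w|/|z|)^{\tau}$ in the region $|w| \gg |z|$), so one cannot simply invoke the Hardy--Littlewood--Sobolev or Stein--Weiss boundedness of $I_{1/2}$ to get a $\tau$-uniform constant. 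Until the uniform $L^2\to L^4$ estimate is actually established, the argument does not close.
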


\section{Properties of sliced functions}\label{s4}
  As will be seen in the next section, we shall slice   Sobolev solutions    to  \eqref{eqn2} and the potential along almost all  complex one-dimensional radial directions. The key idea to justify this approach lies in   the following complex polar coordinate  formula,    whose proof can be found, for instance, in \cite[Lemma 4.2]{PZ2}. Denote by $S^{2n-1}$ the unit sphere in $\mathbb C^n$. Let $B_r$ be the open ball centered at $0$ of radius $r$ in $\mathbb C^n$, and $D_r$ be the open disk centered at $0$ of radius $r$ in $\mathbb C$.    

\begin{lem}\label{cp}
    Let  $u\in L^1(B_{r })$. Then  for  a.e. $\zeta\in S^{2n-1}$,   $|w|^{2n-2}u(w\zeta)$  as a function of $w\in D_{r }$ is in $   L^1(D_{r })$, with 
    $$\int_{|z|< r } u(z) dv_z = \frac{1}{2\pi} \int_{|\zeta| =1}  \int_{|w|< r }|w|^{2n-2}u(w\zeta)dv_wdS_\zeta.$$
\end{lem}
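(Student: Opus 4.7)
The plan is to reduce the identity to the standard real polar coordinate formula on $\mathbb{C}^n \cong \mathbb{R}^{2n}$ by exploiting the rotational invariance of the surface measure on $S^{2n-1}$. First I would replace $u$ by $|u|$ and verify the nonnegative version using Tonelli's theorem; this will simultaneously yield the equality for general $u \in L^1(B_r)$ and the almost-everywhere finiteness of the inner integral on the right-hand side (so that $|w|^{2n-2}u(w\zeta) \in L^1(D_r)$ for a.e. $\zeta$).

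Next I would introduce polar coordinates in the $w$-variable. Writing $w = \rho e^{i\theta}$ with $\rho \in (0, r)$ and $\theta \in [0, 2\pi)$, we have $dv_w = \rho \, d\rho \, d\theta$, hence $|w|^{2n-2} dv_w = \rho^{2n-1} d\rho \, d\theta$. The right-hand side then becomes
$$\frac{1}{2\pi}\int_0^r \rho^{2n-1}\int_{S^{2n-1}}\int_0^{2\pi} u(\rho e^{i\theta}\zeta)\, d\theta\, dS_\zeta\, d\rho,$$
after an application of Fubini (justified by the first step).

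The crux is the observation that for each fixed $\theta$, the map $\zeta \mapsto e^{i\theta}\zeta$ is a unitary transformation of $\mathbb{C}^n$ preserving $S^{2n-1}$ and its surface measure. Therefore the inner spherical integral is independent of $\theta$:
$$\int_{S^{2n-1}} u(\rho e^{i\theta}\zeta)\, dS_\zeta = \int_{S^{2n-1}} u(\rho \omega)\, dS_\omega.$$
Carrying out the $\theta$-integration produces a factor of $2\pi$ that cancels the $\frac{1}{2\pi}$ prefactor, and the remaining expression $\int_0^r \rho^{2n-1}\int_{S^{2n-1}} u(\rho\omega)\, dS_\omega\, d\rho$ equals $\int_{|z|<r} u(z)\, dv_z$ by the classical polar coordinate formula in $\mathbb{R}^{2n}$.

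The computation itself is short; the only technical subtlety is justifying the change in order of integration, which is why I would first carry out the argument with $|u|$ in place of $u$. That step simultaneously secures Fubini's theorem for the signed version and delivers the asserted $L^1(D_r)$-integrability of the slice $w \mapsto |w|^{2n-2}u(w\zeta)$ for almost every $\zeta \in S^{2n-1}$.
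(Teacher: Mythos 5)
Your argument is correct and complete. The paper does not reproduce a proof of this lemma (it cites \cite[Lemma 4.2]{PZ2}), so there is no internal proof to compare against, but your chain of reductions is the standard and natural one: first treat $|u|$ via Tonelli to justify Fubini and obtain the a.e.\ finiteness of the inner integral, then write $w=\rho e^{i\theta}$ so that $|w|^{2n-2}\,dv_w=\rho^{2n-1}\,d\rho\,d\theta$, use that $\zeta\mapsto e^{i\theta}\zeta$ is a unitary (hence measure-preserving) map of $S^{2n-1}$ to eliminate the $\theta$-dependence of the spherical integral, cancel the resulting factor $2\pi$, and recognize the remaining expression as the ordinary polar-coordinate formula in $\mathbb R^{2n}$. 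The only unstated technicality is measurability of $(\rho,\theta,\zeta)\mapsto u(\rho e^{i\theta}\zeta)$, which is handled by passing to a Borel representative of $u$ before applying Tonelli; this is routine and does not affect the substance of the argument.
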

\medskip

\begin{cor}\label{coo}
  Let   $u\in L^p(B_r)$ for some $ p>n\   (=\dim_{\mathbb C}B_r)$. Then given a.e. $\zeta \in S^{2n-1}$, $v(w): =u(w\zeta)$ as a function of $w\in D_r$ is in $L^q(D_r)$ for all $1\le q<  \frac{p}{n}$. In particular,  if $u\in L^p(B_r)$ for some $ p >2n$, then for a.e. $\zeta\in S^{2n-1}$,  $v\in L^2(D_r)$.
\end{cor}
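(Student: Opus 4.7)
The plan is to combine Lemma \ref{cp} with H\"older's inequality, the role of the latter being to redistribute the degenerate weight $|w|^{2n-2}$ that necessarily appears on slices.

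First, since $u\in L^p(B_r)$ we have $|u|^p\in L^1(B_r)$, so applying Lemma \ref{cp} to $|u|^p$ yields that for a.e.\ $\zeta\in S^{2n-1}$ the weighted slice $w\mapsto |w|^{2n-2}|v(w)|^p$ belongs to $L^1(D_r)$, where $v(w):=u(w\zeta)$. The task is then to upgrade this weighted $L^p$ control into an unweighted $L^q$ estimate on $v$.

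Next, I would fix such a $\zeta$ and any $1\le q<p/n$, and split the integrand as
\begin{equation*}
|v(w)|^q=\bigl(|w|^{2n-2}|v(w)|^p\bigr)^{q/p}\cdot |w|^{-(2n-2)q/p}.
\end{equation*}
Applying H\"older's inequality with conjugate exponents $p/q$ and $p/(p-q)$ then gives
\begin{equation*}
\int_{D_r}|v(w)|^q\, dv_w\le\left(\int_{D_r}|w|^{2n-2}|v(w)|^p\, dv_w\right)^{q/p}\left(\int_{D_r}|w|^{-(2n-2)q/(p-q)}\, dv_w\right)^{(p-q)/p}.
\end{equation*}
The first factor is finite for a.e.\ $\zeta$ by the choice above, while the second factor, being a radial integral on a disk in $\mathbb R^2$, is finite precisely when $(2n-2)q/(p-q)<2$. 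A short rearrangement shows this is equivalent to $q<p/n$, which holds by hypothesis.

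The main obstacle is exactly this degenerate weight $|w|^{2n-2}$: Lemma \ref{cp} only controls slices after multiplication by $|w|^{2n-2}$, which is weakest precisely near $w=0$ where slice information is hardest to recover. The H\"older splitting above transfers the weight onto an independently integrable radial factor, at the cost of a factor of $n$ in the integrability exponent, which is what accounts for the restriction $q<p/n$. The ``in particular'' statement then follows immediately by taking $q=2$, which is admissible as soon as $p>2n$.
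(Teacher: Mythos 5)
Your proof is correct and follows exactly the same route as the paper: apply Lemma \ref{cp} to $|u|^p$ to get the weighted slice bound, then split off $|w|^{-(2n-2)q/p}$ and apply H\"older with exponents $p/q$ and $p/(p-q)$, reducing matters to the local integrability of $|w|^{-(2n-2)q/(p-q)}$ in $\mathbb{R}^2$, which is equivalent to $q<p/n$. No discrepancies to note.
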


\begin{proof}
  by Lemma  \ref{cp}
$$\int_{|z|< r} |  u(z)|^p dv_z = \frac{1}{2\pi} \int_{|\zeta| =1}  \int_{|w|< r}|w|^{2n-2}|u(w\zeta)|^pdv_wdS_\zeta. $$
Then  for a.e. $\zeta\in S^{2n-1}$, \begin{equation}\label{hk}
    \int_{|w|< r}|w|^{2n-2}|u(w\zeta)|^pdv_w<\infty. 
\end{equation}  
Making use of  H\"older's inequality
\begin{equation*}\begin{split}
        \int_{|w|< r } \left|v(w)\right|^qdv_w &=  \int_{|w|< r } |w|^{\frac{(2n-2)q}{p}}\left|v(w)\right|^q\cdot  |w|^{-\frac{(2n-2)q}{p}}dv_w \\
        &\le  \left( \int_{|w|< r }  |w|^{2n-2}|u(w\zeta)|^p dv_w\right)^{\frac{q}{p}}  \left(\int_{|w|< r } |w|^{-\frac{(2n-2)q}{p}\frac{p}{p-q}}dv_w\right)^{\frac{p-q}{p}}\\
        &=\left( \int_{|w|< r}|w|^{2n-2}|u(w\zeta)|^pdv_w\right)^{\frac{q}{p}}  \left(\int_{|w|< r } |w|^{-\frac{(2n-2)q}{p-q}}dv_w\right)^{\frac{p-q}{p}}.
\end{split}
\end{equation*}
Since $q<  \frac{p}{n}$, we have $\frac{(2n-2)q}{p-q}<2 $ and thus $ \int_{|w|< r } |w|^{-\frac{(2n-2)q}{p-q}}dv_w<\infty$. This, combined with  \eqref{hk}, proves the corollary.
\end{proof}
\medskip

In order to  convert the unique continuation property   in the higher source dimensional case to   the complex one-dimensional case  where Theorem \ref{pz} can be applied, we first establish Lemma  \ref{cgo} below. This  allows us to obtain sufficient regularity for  the sliced functions when $u\in W_{loc}^{1, p}, p>2n $. As   demonstrated in Example \ref{exhh} in Section \ref{3}, the lemma does not hold for general $W^{1, 2n} $ functions.

\medskip

\begin{lem}\label{cgo}
     Suppose  $u\in W^{1, p}(B_{r })$ for some $p>2n\  (= \dim_{\mathbb R} B_{r })$. Then for a.e.  $\zeta\in S^{2n-1}$,   $v(w): = u(w\zeta)$  as a function of $w\in D_{r }$ belongs to $ W^{1, 2}(D_{r }).  $ Moreover,  \begin{equation}\label{cfo}
   \nabla v(w)  =   \zeta\cdot\nabla  u(w\zeta),\ \  \  w\in  D_{r }
\end{equation}  in the sense of distributions.
     
\end{lem}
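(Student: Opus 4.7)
The plan is to deduce the statement by smooth approximation, with the slicing controlled by Lemma \ref{cp} and the H\"older trick already used in Corollary \ref{coo}. The hypothesis $p>2n$ enters twice: Morrey's embedding provides a continuous representative of $u$, so that the pointwise restriction $v(w):=u(w\zeta)$ is unambiguously defined for every $\zeta\in S^{2n-1}$ and every $w\in D_r$; and $p/n>2$, which is what eventually converts sliced $L^p$ estimates into sliced $L^2$ estimates.

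First I would choose $u_k\in C^\infty(B_r)$ with $u_k\to u$ in $W^{1,p}(B_{r'})$ for each $r'<r$; after the harmless shrinkage I treat this as convergence on $B_r$ itself. For each smooth $u_k$ the classical chain rule gives
\begin{equation*}
\nabla v_k(w) \;=\; \zeta\cdot \nabla u_k(w\zeta), \qquad w\in D_r,
\end{equation*}
where $v_k(w):=u_k(w\zeta)$. Next I would apply Lemma \ref{cp} to the nonnegative functions $|u_k-u|^p$ and $|\nabla u_k-\nabla u|^p$, obtaining
\begin{equation*}
\int_{S^{2n-1}}\!\!\int_{D_r} |w|^{2n-2}\bigl(|u_k(w\zeta)-u(w\zeta)|^p+|\nabla u_k(w\zeta)-\nabla u(w\zeta)|^p\bigr)\,dv_w\,dS_\zeta \;\longrightarrow\; 0.
\end{equation*}
After extracting a subsequence, the inner integrals tend to zero for a.e.\ $\zeta\in S^{2n-1}$. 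Running exactly the H\"older interpolation from the proof of Corollary \ref{coo} (which requires the strict inequality $2<p/n$), this upgrades to the $L^2(D_r)$-convergences $v_k\to v$ and $\zeta\cdot\nabla u_k(\cdot\,\zeta)\to \zeta\cdot\nabla u(\cdot\,\zeta)$.

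To finish, I would test the smooth identity $\nabla v_k=\zeta\cdot\nabla u_k(\cdot\,\zeta)$ against an arbitrary $\varphi\in C_c^\infty(D_r)$ and pass to the $L^2$-limit on both sides, which identifies $\zeta\cdot\nabla u(\cdot\,\zeta)\in L^2(D_r)$ with the distributional gradient of $v$. Combined with $v\in L^2(D_r)$ from Corollary \ref{coo}, this yields $v\in W^{1,2}(D_r)$ together with the identity \eqref{cfo}. The main obstacle is precisely this last step: securing $L^2$-convergence of the sliced gradients requires $p/n>2$, and the argument breaks down at the endpoint $p=2n$, where Corollary \ref{coo} only furnishes sliced integrability in $L^q$ for $q<2$. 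This is consistent with the failure of the lemma for general $W^{1,2n}$ functions advertised before Example \ref{exhh}.
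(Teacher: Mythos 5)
Your argument is correct and follows essentially the same route as the paper: approximate $u$ by smooth functions, apply the chain rule for the approximants, slice the $W^{1,p}$-convergence via Lemma \ref{cp} together with the weighted H\"older estimate from Corollary \ref{coo}, and pass to the limit in the sense of distributions. The only cosmetic difference is that the paper handles the zeroth-order convergence $v_j\to v$ via Morrey's embedding (uniform convergence, since $p>2n$) rather than by slicing $|u_j-u|^p$, and is content with sliced $L^1$ rather than $L^2$ convergence of the gradients, since either suffices for the distributional identity.
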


\begin{proof}
   We only need to  show \eqref{cfo}. In fact, since $u\in W^{1, p}(B_r)$,  $p>2n$, if  \eqref{cfo} holds,  we can apply Corollary \ref{coo} to $u$ and $\nabla u$ respectively, and obtain $v\in W^{1,2}(D_r)$.

   Let $u_j\in C^\infty(B_r)\cap W^{1, p}(B_r)$ be such that $u_j\rightarrow u$ in the $ W^{1, p}(B_r)$ norm and $v_j(w): = u_j(w\zeta), w\in D_r$.   Then  \begin{equation}\label{hhh}
   \nabla v_j(w) = \zeta\cdot\nabla  u_j(w\zeta),  \ \ \ w\in D_r. 
\end{equation}  Since $u$ is continuous on $B_r$ (and so is $v$ on $D_r$),  by Sobolev embedding theorem   there exists some constant $C>0$ such that 
$$\|v_j- v\|_{C(D_{r})}\le \| u_j -u \|_{C(B_{r})}\le C\|u_j-u\|_{W^{1, p}(B_{r}) }\rightarrow 0 $$
as $j\rightarrow 0$. In particular, \begin{equation}\label{cho}
    v_j\rightarrow v \ \ \text{on}\ \ D_r
\end{equation}  in the sense of distributions. 

On the other hand, applying Lemma \ref{cp} to $|\nabla u-\nabla u_j|^p$,   the function $$  g_j(\zeta): =  \int_{|w|< r}|w|^{2n-2}|\nabla u(w\zeta)-\nabla u_j(w\zeta)|^pdv_w, \ \ \zeta\in S^{2n-1}$$ satisfies
\begin{equation*}
    \begin{split}
         \int_{|\zeta| =1} |g_j(\zeta)|dS_\zeta &=  \int_{|\zeta| =1}  \int_{|w|< r}|w|^{2n-2}|\nabla u(w\zeta)-\nabla u_j(w\zeta)|^pdv_wdS_\zeta\\
         &= 2\pi\int_{|z|< r}| \nabla u(z)-\nabla u_j(z)|^p dv_z\rightarrow 0 
    \end{split}
\end{equation*} 
   as $j\rightarrow \infty$.  Hence, by passing to a subsequence if necessary (see, for instance, \cite[Theorem 4.9]{Br}), one has for a.e. $\zeta\in S^{2n-1}$, 
   \begin{equation*} 
       g_j(\zeta)\rightarrow 0 
   \end{equation*}
  as $j\rightarrow 0$.   Making use of  H\"older's inequality
   \begin{equation*}
       \begin{split}
            \int_{|w|< r} | \zeta\cdot\nabla  u_j(w\zeta) - \zeta\cdot\nabla  u(w\zeta) |dv_w   \le  & \int_{|w|< r} |w|^{- \frac{2n-2}{p}}\cdot |w|^{\frac{2n-2}{p}} |  \nabla  u_j(w\zeta) - \nabla  u(w\zeta) |dv_w\\
          \le & \left(\int_{|w|< r} |w|^{- \frac{2n-2}{p-1}}\right)^\frac{p-1}{p}\cdot   (g_j(\zeta))^{\frac{1}{p}} \rightarrow 0
       \end{split}
   \end{equation*} 
   as $j\rightarrow \infty$. Here we used the fact that $p>2n$, so  $ \frac{2n-2}{p-1}<2$ particularly.
  This implies that \begin{equation}\label{cko}
      \zeta\cdot\nabla  u_j(w\zeta)\rightarrow \zeta\cdot\nabla  u(w\zeta)\ \ \text{on}\ \    D_r 
  \end{equation}   in the sense of distributions. \eqref{cfo} is thus proved in view of \eqref{hhh}, \eqref{cho} and \eqref{cko}.    
\end{proof}

\medskip

The following lemma allows us to   extend the   flatness   of $  W^{1, p}_{loc}, p>2n$ solutions  to   their  restrictions along the radial directions. 

\begin{lem}\label{fe}
    Suppose  $u\in W^{1, p}(B_{r})$ for some $p>2n (= \dim_{\mathbb R} B_{r})$. Then $u$ vanishes to infinite order in the $L^1$ sense at $0$ if and only if for every $m\ge 1$, \begin{equation}\label{hg}
    |u(z)|= O(|z|^m) \ \ \text{for all}\ \  |z|<<1.\end{equation} In particular, if $u$ vanishes to infinite order in the $L^1$ sense at $0\in B_{r}$, then for a.e.  $\zeta\in S^{2n-1}$,   $v(w): = u(w\zeta)$  as a function of $w\in D_{r}$ vanishes to infinite order in the $L^2$ sense at $0\in D_{r}$.
\end{lem}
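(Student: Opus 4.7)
The plan is to prove the stated equivalence first; the ``in particular'' clause will then fall out almost for free. The direction from pointwise flatness to $L^1$ flatness is straightforward: if for each $m\ge 1$ there exist $C_m,r_m>0$ with $|u(z)|\le C_m|z|^m$ on $|z|<r_m$, then $\int_{|z|<r}|u|\,dv_z\le C_m' r^{m+2n}$ for every $r<r_m$, which is exactly the $L^1$ flatness at $0$.

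For the nontrivial direction I would invoke Morrey's inequality, which applies because $p>2n=\dim_{\mathbb R}B_r$; it gives a continuous representative of $u$ and, for any $z_0\in B_r$ and any ball $B_\rho(z_0)\subset B_r$, the pointwise bound
\[
|u(z_0)|\le C\rho^{-2n}\int_{B_\rho(z_0)}|u|\,dv + C\rho^{1-2n/p}\|\nabla u\|_{L^p(B_\rho(z_0))}.
\]
Fix $m\ge 1$, choose $s>\max\{1,\,m/(1-2n/p)\}$, and take $\rho=|z_0|^s$. For $|z_0|$ small, $B_\rho(z_0)\subset B_{2|z_0|}$, so the first term is bounded by $C|z_0|^{-2ns}\int_{|z|<2|z_0|}|u|\,dv$, which by the assumed $L^1$ flatness is $o(|z_0|^{k-2ns})$ for every $k\ge 1$; choosing $k>m+2ns$ renders this $o(|z_0|^m)$. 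The second term is dominated by $C|z_0|^{s(1-2n/p)}\|\nabla u\|_{L^p(B_r)}$, and the condition $s(1-2n/p)>m$ makes this $o(|z_0|^m)$ as well. Combining yields $|u(z_0)|=o(|z_0|^m)$, hence in particular $|u(z)|=O(|z|^m)$ for $|z|\ll 1$.

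With the equivalence established, the sliced statement is immediate. For each $m\ge 1$ pick $C_m,r_m>0$ so that $|u(z)|\le C_m|z|^m$ on $B_{r_m}$. Working with the continuous representative, $v(w):=u(w\zeta)$ is pointwise defined for every $\zeta\in S^{2n-1}$ and satisfies $|v(w)|\le C_m|w|^m$ for $|w|<r_m$; therefore $\int_{|w|<r}|v(w)|^2\,dv_w=O(r^{2m+2})$ for every $m$, which is $L^2$ flatness of $v$ at $0$ (in fact uniformly in $\zeta$).

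The main obstacle is the nontrivial direction. Applying Morrey at the natural scale $\rho=|z_0|$ yields only the fixed decay $|u(z_0)|=O(|z_0|^{1-2n/p})$, because $\|\nabla u\|_{L^p}$ carries no quantitative smallness. The essential device is to decouple the two scales by taking $\rho=|z_0|^s$ with $s$ growing linearly in the target exponent $m$; this amplifies the $L^1$ flatness of $u$ on $B_{2|z_0|}$ enough to overpower the factor $\rho^{-2n}$ in the mean term, while the gradient term remains controlled since it only costs a positive power of $\rho$.
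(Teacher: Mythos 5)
Your proof is correct, and it takes a genuinely different route from the paper's. Both arguments hinge on a Morrey-type estimate, but they deploy it differently. The paper fixes an exponent $q$ with $2n<q<p$, passes to the auxiliary function $v=u^2$ (so $v(0)=0$ and $\nabla v=2u\nabla u$), uses the sup bound on $u$ together with the $L^1$ flatness and H\"older to show $\|\nabla v\|_{L^q(B_t)}=O(t^{2m})$, and then applies Morrey to $v$ at the \emph{natural} scale $t$. The flatness is thus injected into the gradient term before Morrey is invoked. You instead apply Morrey directly to $u$ but at the \emph{shrunken} scale $\rho=|z_0|^s$, choosing $s$ linearly in the target exponent $m$; the $L^1$ flatness is amplified enough to dominate the $\rho^{-2n}$ factor in the mean term, while the gradient term, with no quantitative smallness at all, is tamed purely by the positive power $\rho^{1-2n/p}$. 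Your argument is somewhat more direct (no auxiliary function, no intermediate exponent $q$), and it applies verbatim to vector-valued $u$, whereas the paper's squaring step is tailored to scalars and requires the minor modification $v=|u|^2$ in general — a point the paper addresses only in a closing remark. One small bonus of the paper's approach is that it keeps the Morrey constant fixed throughout (single exponent $q$, single ball $B_t$), while yours relies on the scale-invariance of the Morrey constant across all the balls $B_\rho(z_0)$; that invariance is standard, so this is a stylistic rather than substantive difference. Your treatment of the ``in particular'' clause is also fine and in fact yields the $L^2$ flatness of the slice $v(w)=u(w\zeta)$ for \emph{every} $\zeta$, slightly sharper than the a.e.\ statement, since it works directly with the continuous representative.
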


\begin{proof}
    The backward direction is obvious by definition. In fact, one can also easily check that, for any given $q\ge 1$,   as long as  $u\in L^q$ near $0$ and satisfies \eqref{hg},   $u$  vanishes to infinite order in the $L^q$ sense at $0$.
    
    Assume $u$ vanishes to infinite order in the $L^1$ sense at $0$. Since   $p>2n$, by Sobolev embedding theorem  $u$ is continuous on $B_{r}$ with
      \begin{equation}\label{gg1}
        \sup_{|z|< {r} } |u | + \int_{|z|<r }|\nabla u|^{p}  \le C_0.   
    \end{equation}
   for some constant $C_0>0$. Fix some $q$ with $2n<q<p$. For every $m\ge 1$, by the continuity of $u$ and the $L^1$ flatness of $u$ at $0$, one has $u(0)=0$  and 
    \begin{equation}\label{hh}
        \int_{|z|<t} |u |^{\frac{ qp}{p-q}}\le  C_0^{\frac{ qp}{p-q}-1}\int_{|z|<t} |u | \le O(t^{\frac{2pqm}{ p-q}}), \ \ \text{for all}\ \ t<<1.
    \end{equation}
    Letting $v: = u^2$, then $v\in W^{1, p}(B_{r}) $ with $\nabla v = 2u \nabla u$ on $ B_{r}$. Moreover, making use of H\"older's inequality, \eqref{gg1} and \eqref{hh}, 
    \begin{equation}\label{gg2}
    \begin{split}
                \int_{|z|<t}|\nabla v|^q =&   2^q\int_{|z|<t}|u|^{ q}|\nabla u|^q \le 2^q\left(\int_{|z|<t} |u|^{\frac{ qp}{p-q}}\right)^{\frac{p-q }{ p}} \left(\int_{|z|<t}|\nabla u|^{p}\right)^\frac{q}{p}\\
        \le& 2^pC_0^\frac{q}{p}\left(\int_{|z|<t} |u|^{\frac{ qp}{p-q}}\right)^{\frac{p-q }{ p}}\le  O(t^{2qm}),      \ \ \text{for all}\ \ t<<1.    
    \end{split}
    \end{equation}

    On the other hand, since $v(0)=0$ and $q>2n$, there exists a constant $C_1>0$ such that 
    \begin{equation*}
      \sup_{|z|\le \frac{t}{2}} |v|\le C_1t^{1- \frac{2n}{q}}\left( \int_{B_{t}}|\nabla v|^q\right)^{\frac{1}{q}} ,\ \ \text{for all}\ \ t<  r.
    \end{equation*}
     See, for instance, \cite[pp. 283]{Ev}. Together with \eqref{gg2}, we get  
    \begin{equation*}
      |u(z)|^2= |v( z)|\le    C_1 \left( \int_{B_{2|z|}}|\nabla v|^q\right)^{\frac{1}{q}} \le   O(|z|^{2m}), \ \ \text{for all}\ \ |z|<<1.
    \end{equation*}
  This proves the forward direction.

  Finally, if $u$ vanishes to infinite order in the $L^1$ sense at $0$, then $u$ satisfies \eqref{hg} by the equivalence of the two types of  flatness. Hence for a.e. $\zeta\in S^{2n-1}$, \eqref{hg} holds true for $v$ as well. In particular, since $v\in L^2$ near $0$ due to Lemma \ref{cgo}, $v$ vanishes to infinite order in the $L^2$ sense at $0$. 
\end{proof}

We would like to note that, although both Lemma \ref{cgo} and Lemma \ref{fe} are stated for scalar functions for simplicity of notations, they can be seamlessly extended to the case of vector-valued functions.

\section{Proof of the main theorems}\label{3}

In this section we shall prove Theorems \ref{main3}-\ref{main4}. Let us start by stating  a local ellipticity lemma of $\bar\partial$, which will be repeatedly used in  the boot-strap argument.

\begin{lem}\cite[Lemma 3.1]{PZ2}\label{el}
    Let $\Omega$ be a   domain in $\mathbb C^n$  and $1<p<\infty$. Let $V \in L_{loc}^{p}(\Omega)$ be a $\bar\partial$-closed $(0,1)$ form on $\Omega$. Then every  solution to $\bar\partial f =V $ on $\Omega$ in the sense of distributions belongs to $ W_{loc}^{1, p}(\Omega) $. 
\end{lem}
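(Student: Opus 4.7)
The problem is local, so I would fix an arbitrary $z_0 \in \Omega$, choose a ball $B \subset\subset \Omega$ containing $z_0$, and aim to prove $f \in W^{1,p}(B')$ for some smaller ball $B'\subset B$. The plan is to produce an explicit local solution $g \in W^{1,p}(B')$ to $\bar\partial g = V$ on $B'$, and then use the hypoellipticity of $\bar\partial$ to handle the difference $f - g$.

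For the construction of $g$, I would employ the Bochner--Martinelli--Koppelman integral formula (or a ball-adapted variant), which yields an explicit operator $T$ whose kernel is of order $|\zeta - z|^{-(2n-1)}$, such that $\bar\partial T(V) = V$ on $B'$ whenever $V$ is $\bar\partial$-closed. In the case $n=1$ the closedness is automatic for degree reasons and $T$ reduces to the Cauchy transform. Setting $g := T(V)$, the $\bar\partial$-derivatives of $g$ equal the components of $V$ and thus lie in $L^p$. The $\partial$-derivatives of $g$ can be expressed, up to lower-order terms controlled by Young's inequality on convolution with locally integrable kernels, as Calder\'on--Zygmund singular integrals of the components of $V$; by the classical $L^p$ boundedness of such operators for $1<p<\infty$ one obtains $\partial g \in L^p(B')$. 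Combining both pieces gives $g \in W^{1,p}(B')$.

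Then $f - g$ is a distributional solution to $\bar\partial(f - g) = 0$ on $B'$, so by the Weyl-type lemma for $\bar\partial$ (hypoellipticity) it agrees almost everywhere with a holomorphic, hence smooth, function $h$ on $B'$. Writing $f = g + h$ yields $f \in W^{1,p}(B')$, and since $z_0$ was arbitrary, $f \in W^{1,p}_{loc}(\Omega)$.

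The main obstacle I anticipate is the careful justification that the formal $\partial$-differentiation of the Bochner--Martinelli--Koppelman kernel under the integral sign corresponds to a principal-value Calder\'on--Zygmund operator on the components of $V$, so that the standard $L^p$ theorem applies cleanly. The $\bar\partial$-closedness hypothesis on $V$ (which is in fact automatic from the existence of the distributional solution $f$, since $\bar\partial V = \bar\partial\bar\partial f = 0$) plays an essential role here, as it is precisely what guarantees that the integral operator $T$ actually solves $\bar\partial g = V$ rather than merely producing a weaker cohomological identity in the Koppelman homotopy formula.
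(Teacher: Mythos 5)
Your proposal is correct and represents a standard complex-analytic route to this ellipticity statement. Since the paper under review only cites [PZ2, Lemma 3.1] without reproducing its proof, I will compare your argument with what is, to my knowledge, the more commonly used and slightly more elementary route, which the cited paper likely follows.

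Your approach builds the parametrix directly from the Bochner--Martinelli--Koppelman kernel $T$, uses $\bar\partial T(V) = V$ (valid for $\bar\partial$-closed $V$ up to the usual cutoff/Koppelman correction, which you gloss over but can be supplied), obtains the $\bar\partial$-derivatives of $g = T(V)$ for free from the equation, and obtains the $\partial$-derivatives from the Calder\'on--Zygmund boundedness of $\partial T$. This is all sound; you correctly identify the main technical burden as verifying that $\partial T$ is indeed a principal-value singular integral of CZ type.

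The more economical alternative avoids the BMK kernel entirely by passing through the Laplacian. Since $\Delta = 4\sum_j \partial_j \bar\partial_j$ on functions, the equation $\bar\partial f = V$ (in the sense of distributions) immediately gives
\[
\Delta f = 4\sum_{j=1}^n \partial_j V_j \quad\text{in } \mathcal D'(\Omega),
\]
with $V_j \in L^p_{loc}$, so the right-hand side lies in $W^{-1,p}_{loc}(\Omega)$. The classical $L^p$ elliptic theory for $\Delta$ (a Riesz-transform/CZ statement that is textbook, unlike the less frequently stated CZ property of the differentiated BMK kernel) then yields $f \in W^{1,p}_{loc}(\Omega)$ directly, without constructing an explicit local solution $g$ or invoking hypoellipticity of $\bar\partial$ separately. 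The $\bar\partial$-closedness of $V$ plays the same role in both arguments: as you correctly observe, it is automatic from $\bar\partial\bar\partial f = 0$, and in the Laplacian route it enters only as the compatibility condition guaranteeing that a distributional $f$ exists at all. Both approaches are ultimately the same CZ estimate in disguise; the Laplacian reduction simply lets you cite it in its most familiar form. Your version is longer and technically heavier but is perfectly valid, and has the merit of staying inside $\bar\partial$-machinery throughout.
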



\begin{proof}[Proof of Theorem \ref{main3}: ] Without loss of generality, let $z_0=0$, and $r$ be small such that $B_r\subset \Omega$. 
Since $u\in L_{loc}^{\frac{2n}{2n-1}}(B_r)$ by Sobolev embedding theorem, as an application of H\"older's inequality, $Vu\in L_{loc}^{\frac{2n }{(2n-1) +\frac{2n}{p}}}(B_r)$. It follows from Lemma \ref{el} and the inequality \eqref{eqn2}  that $u\in W_{loc}^{1, \frac{2n }{(2n-1) +\frac{2n}{p}}}(B_r)\subset L_{loc}^{\frac{2n}{(2n-1)+\frac{2n}{p}-1}}(B_r)$. Since $p>2n$, a boot-strap  argument as above can eventually give  $u\in W_{loc}^{1, p}(B_r)$.  

For each fixed $\zeta\in S^{2n-1}$, let  $v(w): = u(w\zeta)$ and $ \tilde V(w): = V(w\zeta), w\in D_{r}$. Since     $V\in L^p_{loc}(B_r), p>2n$, by   Corollary \ref{coo} we have   $\tilde V\in L^2_{loc}(D_r)$. On the other hand, it follows from Lemma \ref{fe} and the $L^1$ flatness of $u$ at $0$ that    $v$ vanishes to infinite order in the $L^2$ sense  at $0$. Moreover,     $v\in W_{loc}^{1, 2}(D_r)$ by Lemma \ref{cgo}, and as a consequence of \eqref{cfo}, 
$$|\bar\partial v(w) |= | \zeta\cdot \bar\partial  u(w\zeta)|\le V(w\zeta) |u(w\zeta)| = \tilde V(w) |v(w)|,\ \ w\in D_{r}. $$ 
Hence  we  can make use of  Theorem \ref{pz} part 2) to $v$ and obtain $v = 0$ on $D_{r}$ for a.e. $\zeta\in S^{2n-1}$. Thus $u=0$ on $B_{r}$. Apply the weak unique continuation property in Theorem \ref{pz} part 1) to further get $u\equiv 0$ on $\Omega$. 
\end{proof}

 \medskip

Before proving Theorems \ref{main5}-\ref{main6} for $L_{loc}^{2n}$ potentials, we point  out that the slicing method in the proof of Theorem \ref{main3} fails to work  in general if $V$ is merely in $ L^{2n}_{loc}$.   More precisely, there exists a $L_{loc}^{2n}$  potential $V$   whose  complex radial restriction  is not in $L_{loc}^2$.

\begin{example}\label{exhh}
Assume $n\ge 2$. For each $\epsilon\in \left(\frac{1}{2},  \frac{2n-1}{2n}\right)$, consider   $$u(z) = e^{-(-\ln |z|)^\epsilon}, \ \ z\in B_{\frac{1}{2}}\subset \mathbb C^n, $$  and $$V(z) = \frac{\epsilon (-\ln |z|)^{\epsilon-1}}{2|z|}, \ \ z\in B_{\frac{1}{2}}.$$ Then $u\in W^{1, 2n}(B_{\frac{1}{2}})$, $V\in L^{2n}(B_{\frac{1}{2}}  )$ and 
$$  |\bar\partial u| \le V|u|  \ \ \text{on}\ \ B_{\frac{1}{2}}.$$
(One can verify that $u$ only vanishes to a finite order at $0$ in the $L^q$ sense for every $q\ge 1$.) On the other hand,  for each $\zeta\in S^{2n-1}$, the complex radial restriction  of $V$ is $$ \tilde V(w): = V(w\zeta) = \frac{\epsilon (-\ln |w|)^{\epsilon-1}}{2|w|}, \ \ w\in D_{\frac{1}{2}}.$$ 
It is easy to see that $\tilde V\notin L^2_{loc}(D_\frac{1}{2})$ since $\epsilon> \frac{1}{2}$. 
\end{example}

\medskip

\begin{proof}[Proof of Theorems \ref{main5} and \ref{main6}: ]  Let $z_0=0$ and $r>0$ be small such that  $B_r\subset \Omega $.  For each fixed $\zeta\in S^{2n-1}$, let   $\tilde V(w): = |w|^{\frac{n-1}{n}}V(w\zeta)$   and $v(w): = u(w\zeta), w\in D_{r}$. It follows from  Lemma \ref{cp} that $\tilde V\in L_{loc}^{2n}(D_{r})$ for a.e. $\zeta\in S^{2n-1}$.  On the other hand, since $u\in W_{loc}^{1, p}(B_r)$ for some $p>2n$,  $v$ vanishes to infinite order at $0$ in the $L^2$ sense by Lemma \ref{fe}. Moreover, as a consequence of Lemma \ref{cgo},  $v\in W_{loc}^{1, 2}(D_r)$ and  satisfies
$$|\bar\partial v(w) | \le   |w|^{-\frac{n-1}{n}}\tilde V(w)|v(w)|,\ \ w\in D_{r}. $$
For a.e. $\zeta\in S^{2n-1}$,   employ   Theorem \ref{333} if $N=1$; employ Theorem \ref{main7} if $n=2$. In both cases, we get  $v = 0$ on $D_{r}$ and thus   $u=0$ on $B_{r}$. Apply the weak unique continuation property to get $u\equiv 0$.
\end{proof}
\medskip

\begin{remark}\label{re}
  In view of Theorems \ref{main3}-\ref{main6},  the following two questions still remain  open. With an approach similar as in the proof to Theorems \ref{main5}-\ref{main6}, the resolution of Question 1  is reduced to that of Question 2.\\
   \noindent\textbf{1.  } 
     Let $\Omega$ be a  domain in $\mathbb C^n, n\ge 3$ and $N\ge 2$. Suppose   $u: \Omega\rightarrow \mathbb C^N$ with $u\in W_{loc}^{1, p}(\Omega)$ for some $p>2n$ and   satisfies $ |\bar\partial u| \le V|u|$ a.e. on $\Omega$ for some    $V \in L_{loc}^{2n}(\Omega)$. If $u$ vanishes to infinite order in the $L^1$ sense at some $z_0\in \Omega$, does $u$ vanish  identically?  
        \medskip
        
     \noindent\textbf{2. }
     Let $\Omega$ be a  domain in $\mathbb C$ containing $0$, and  $n, N\in \mathbb Z^+$  with $n\ge 3, N \ge 2$. Suppose   $u: \Omega\rightarrow \mathbb C^N$ with $u\in W_{loc}^{1,2}(\Omega)$ and    satisfies $ |\bar\partial u| \le {|z|^{-\frac{n-1}{n}}} V |u|$ a.e. on $\Omega$ for some    $V \in L_{loc}^{2n}(\Omega)$.  If $u$ vanishes to infinite order in the $L^2$ sense at $0\in \Omega$, does  $u$ vanish  identically?

\end{remark}
\medskip

Next we prove Theorem \ref{main4} for $W^{1,1}_{loc}$ solutions to    $
    |\bar\partial u|\le \frac{C}{|z|}|u|$ a.e. on $\Omega$, where $C$ is a positive constant.  It is not hard to see that $u\in W^{1, q}$ everywhere away from $0$ for all $q < \infty$ using a boot-strap argument since $\frac{1}{|z|}\in L^\infty$ off $0$. However, the argument is not directly effective near  $0$ because $\frac{1}{|z|}\notin L^{2n}$ there.  The following two  lemmas show that     $u\in W^{1, q}$  for all $q < \infty$ near $0$ under the $L^1$ flatness assumption at $0$.  

 \begin{lem}\label{22}
Let $u\in L^1$ near $0\in \mathbb C^n$, and vanishes to infinite order in the $L^1$ sense at $0$. Then for each $M>0$, $\frac{u}{|z|^M} \in L^1$ near $0$, and vanishes to infinite order in the $L^1$ sense at $0$. 
\end{lem}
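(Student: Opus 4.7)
The plan is a dyadic decomposition near the origin, using the flatness of $u$ to absorb both the singularity $|z|^{-M}$ and any prescribed polynomial weight $r^{-m}$. Fix $r_0>0$ small enough that the $L^1$ flatness of $u$ at $0$ gives, for every integer $m'\ge 1$, a constant $C_{m'}>0$ with
\[
\int_{|z|<r}|u(z)|\,dv_z \le C_{m'}\,r^{m'}\qquad \text{for all } 0<r\le r_0.
\]
I would write, for any $0<r\le r_0$,
\[
\int_{|z|<r}\frac{|u(z)|}{|z|^M}\,dv_z
= \sum_{k=0}^{\infty}\int_{r2^{-k-1}\le |z|<r 2^{-k}}\frac{|u(z)|}{|z|^M}\,dv_z,
\]
and on each dyadic shell bound $|z|^{-M}\le (r2^{-k-1})^{-M}=2^M r^{-M}2^{kM}$.

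This reduces the estimate to
\[
\int_{|z|<r}\frac{|u(z)|}{|z|^M}\,dv_z
\le 2^M r^{-M}\sum_{k=0}^\infty 2^{kM}\int_{|z|<r 2^{-k}}|u(z)|\,dv_z
\le 2^M r^{-M}C_{m'}r^{m'}\sum_{k=0}^\infty 2^{k(M-m')}.
\]
Choosing $m'>M$ makes the geometric series converge, which simultaneously yields $\frac{u}{|z|^M}\in L^1$ on $\{|z|<r_0\}$ (take $r=r_0$) and the estimate
\[
\int_{|z|<r}\frac{|u(z)|}{|z|^M}\,dv_z \le C'_{m'}\,r^{m'-M},\qquad 0<r\le r_0.
\]

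To conclude the infinite-order vanishing of $\frac{u}{|z|^M}$ in the $L^1$ sense, given any $m\ge 1$ I would simply apply the inequality above with the flatness index $m'=M+m+1$, so that
\[
r^{-m}\int_{|z|<r}\frac{|u(z)|}{|z|^M}\,dv_z \le C'_{m'}\,r \longrightarrow 0\qquad \text{as } r\to 0.
\]
Since $m$ was arbitrary, this finishes the proof. There is no substantive obstacle: the only care needed is to choose the flatness order $m'$ strictly larger than $M$ (to converge the series) and larger than $M+m$ (to beat the extra $r^{-m}$ factor), both of which are permitted by the hypothesis that $u$ is flat of every order.
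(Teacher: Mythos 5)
Your proof is correct and takes essentially the same approach as the paper: a dyadic decomposition of $\{|z|<r\}$ into shells $\{r2^{-k-1}\le|z|<r2^{-k}\}$, bounding $|z|^{-M}$ on each shell, and summing the resulting geometric series by invoking flatness of $u$ to an order exceeding $M$. The only cosmetic difference is that the paper works with the $\epsilon$-form of flatness ($\int_{|z|<r}|u|\le\epsilon\,r^{m+M}$ for $r$ small) to get the conclusion directly, whereas you use the constant form $\int_{|z|<r}|u|\le C_{m'}r^{m'}$ and then divide by $r^m$; both are equivalent.
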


\begin{proof}
For each $m\ge 1$ and $\epsilon>0$, by  the $L^1$ flatness of $u$ at $0$,   
\begin{equation*}
    \int_{|z|<r}|u|dv_{z}\le \epsilon r^{m+M}  \ \ \text{for all}\ \ r<<1.
\end{equation*}
Then  
\begin{equation*}
    \begin{split}
      \int_{|z|<r}\frac{|u|}{|z|^{M}}dv_{z} =&     \sum_{j=1}^\infty \int_{\frac{r}{2^j}<|z|<\frac{r}{2^{j-1}}}\frac{|u|}{|z|^{M}}dv_{z} 
      \le   \sum_{j=1}^\infty \frac{2^{Mj}}{r^{M}}\int_{\frac{r}{2^j}<|z|<\frac{r}{2^{j-1}}}|u|dv_{z}
      \le  \sum_{j=1}^\infty \frac{2^{Mj}}{r^{M}}\int_{|z|<\frac{r}{2^{j-1}}}|u|dv_{z} \\
      \le&  \epsilon \sum_{j=1}^\infty \frac{2^{Mj}}{r^{M}}\frac{r^{m+M}}{2^{(m+M)(j-1)}} 
      = \epsilon2^{M}r^m\sum_{j=1}^\infty 2^{-m(j-1)}\le \epsilon2^{M+1} r^m  \ \ \text{for all}\ \ r<<1.
    \end{split}
\end{equation*}
In particular, $\frac{u}{|z|^M}\in L^1$ near $0$, and vanishes to infinite order in the $L^1$ sense at $0$.
\end{proof}


\begin{lem}\label{hhl}
  Let $\Omega$ be a  domain in $\mathbb C^n$ and $0\in \Omega$. Let $u: \Omega\rightarrow \mathbb C^N$ with $u\in W_{loc}^{1,1}(\Omega)$,  and  satisfy $ |\bar\partial u| \le \frac{C}{|z|}|u|$ a.e. on $\Omega$ for some constant $C>0$. Assume $u$  vanishes to infinite order  in the $L^1 $ sense  at $0\in \Omega$. Then $u\in W_{loc}^{1, q}(\Omega)$ for every  $q<\infty$.   
\end{lem}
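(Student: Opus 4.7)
The strategy is to convert the $L^1$ flatness of $u$ at $0$ into a pointwise bound $|u(z)|=O(|z|^N)$ for every $N\ge 1$ as $z\to 0$, via a scaling argument that exploits the dilation-invariance of the equation $|\bar\partial u|\le\frac{C}{|z|}|u|$. Once such pointwise flatness is in hand, $|\bar\partial u|\le\frac{C|u|}{|z|}$ becomes locally bounded near $0$ and Lemma \ref{el} immediately yields $u\in W^{1,q}_{loc}$ for every $q<\infty$.

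Away from $0$ the matter is routine: since $\frac{C}{|z|}\in L^\infty_{loc}(\Omega\setminus\{0\})$, iterating Sobolev embedding with Lemma \ref{el} gives $u\in W^{1,q}_{loc}(\Omega\setminus\{0\})$ for every $q<\infty$. For a neighborhood of $0$, fix $r_0>0$ with $B_{4r_0}\subset\Omega$ and for each $\rho\in(0,r_0)$ consider the dilate $\tilde u_\rho(z):=u(\rho z)$ on the fixed annulus $A':=\{1/2<|z|<4\}$. The chain rule and the pointwise inequality for $u$ give
\begin{equation*}
|\bar\partial\tilde u_\rho(z)|=\rho|\bar\partial u(\rho z)|\le\rho\cdot\frac{C}{|\rho z|}|u(\rho z)|=\frac{C}{|z|}|\tilde u_\rho(z)|\le 2C|\tilde u_\rho(z)|,\quad z\in A',
\end{equation*}
so $\tilde u_\rho$ satisfies a $\bar\partial$-inequality with a coefficient uniformly bounded by $2C$. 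Standard interior regularity for this bounded-coefficient first-order elliptic system --- a finite Sobolev/Lemma-\ref{el} boot-strap through nested sub-annuli first promotes $\tilde u_\rho$ to $W^{1,q}$ on the interior of $A'$ for every $q<\infty$, after which Moser iteration on the weak sub-solution $|\tilde u_\rho|^2$ of an elliptic inequality of the form $-\Delta w\le C_1 w$ --- yields a constant $C_*=C_*(n,C)$, independent of $\rho$, with
\begin{equation*}
\|\tilde u_\rho\|_{L^\infty(\{1<|z|<2\})}\le C_*\|\tilde u_\rho\|_{L^1(A')}.
\end{equation*}

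Undoing the scaling,
\begin{equation*}
\sup_{\rho<|w|<2\rho}|u(w)|\le C_*\rho^{-2n}\int_{|w|<4\rho}|u|\,dv_w,\qquad 0<\rho<r_0.
\end{equation*}
The $L^1$ flatness of $u$ at $0$ makes the right-hand side $o(\rho^{m-2n})$ for every $m\ge 1$, hence $o(\rho^N)$ for every $N\ge 1$. Covering a punctured neighborhood of $0$ by the annuli $\{\rho<|w|<2\rho\}$ yields $|u(w)|=O(|w|^N)$ as $w\to 0$ for every $N\ge 1$. Taking $N\ge 2$ makes $|u(w)|/|w|$ locally bounded near $0$, so $|\bar\partial u|\le\frac{C|u|}{|z|}\in L^\infty$ on some $B_{r_1}\ni 0$, and Lemma \ref{el} then gives $u\in W^{1,q}_{loc}(B_{r_1})$ for every $q<\infty$. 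Combining with the estimate away from $0$ completes the proof.

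\textbf{Main obstacle.} The essential technical input is the $L^\infty$--$L^1$ estimate on the rescaled annulus with a constant independent of $\rho$; without this uniformity, integral flatness cannot be converted into pointwise flatness. The scale-invariance of the original equation is precisely what makes this uniformity possible: after rescaling, the coefficient is universally bounded on a fixed annulus, so the interior Moser-type bound depends only on $n$ and $C$. Starting from the minimal regularity $u\in W^{1,1}_{loc}$, the initial boot-strap is needed to first produce enough regularity to justify the sub-solution property of $|u|^2$ (or a suitable power) and the integrations by parts underlying the Moser iteration.
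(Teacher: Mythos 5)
Your proof takes a genuinely different route from the paper's. The paper's proof of Lemma \ref{hhl} stays at the level of integrability: it first proves Lemma \ref{22} (that $L^1$-flatness of $u$ at $0$ implies $L^1$-flatness of $u/|z|^M$ for every $M$), then uses this together with H\"older's inequality to show that $u\in L^p$ near $0$ with $p>1$ and $L^1$-flat at $0$ implies $u/|z|\in L^q$ near $0$ for every $q<p$, and finally runs a bootstrap with Lemma \ref{el} and Sobolev embedding. This is purely measure-theoretic, needs no scaling, and in particular never establishes pointwise flatness as an intermediate step (that comes only later, in Lemma \ref{fe}, once $W^{1,p}_{loc}$, $p>2n$, is in hand). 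Your approach instead proves pointwise flatness first, via a rescaling to a fixed annulus plus a uniform $L^\infty$--$L^1$ interior estimate, and then reads off the regularity near $0$ in a single application of Lemma \ref{el}. In principle this is a perfectly good alternative structure, and it would also yield the conclusion of Lemma \ref{fe} as a by-product.

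However, there is a genuine gap in the key quantitative step. You obtain the uniform $L^\infty$--$L^1$ bound on the rescaled annulus via ``Moser iteration on the weak sub-solution $|\tilde u_\rho|^2$ of an elliptic inequality of the form $-\Delta w\le C_1 w$.'' This subsolution property does \emph{not} follow from $|\bar\partial \tilde u_\rho|\le 2C|\tilde u_\rho|$, even with bounded coefficient: writing $\Delta|u|^2 = 4\sum_{j,k}\big(|\partial_j u_k|^2+|\bar\partial_j u_k|^2\big) + 8\,\Real\sum_{j,k}\overline{u_k}\,\partial_j\bar\partial_j u_k$, the cross term involves $\partial_j(\bar\partial_j u_k)$, which is not controlled by $|u|^2$; and in the weak formulation one integration by parts of $\int |u|^2\Delta\varphi$ already produces the uncontrolled quantity $\partial u$, since the inequality only constrains $\bar\partial u$. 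So the Moser iteration as stated cannot be carried out. The uniform interior bound you want is nevertheless correct, but the right mechanism for a first-order $\bar\partial$-system is not De Giorgi--Nash--Moser for a second-order subsolution: one should instead iterate through a finite chain of nested cutoffs, at each step writing $\eta\tilde u_\rho = T\bar\partial(\eta\tilde u_\rho)$ with the Bochner--Martinelli solution operator $T$ and using Young/HLS to promote $L^q$ to $L^{q^*}$ with a constant depending only on $n$, $C$, and the fixed annuli. With that substitution your plan can be completed, but the argument is considerably heavier than the paper's, which avoids scaling and pointwise estimates altogether by working directly with $L^1$ flatness of $u/|z|^M$ (Lemma \ref{22}) and a H\"older interpolation.
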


\begin{proof}
    We first claim   if  $u\in L^{p}$ near $0$ for some $p>1$, then   $ \frac{u}{|z| }\in L^q$ for every $1<q<p$. Indeed, for each $q<p$, letting $\epsilon = \frac{p-q}{p-1}$, then $0<\epsilon<1$ and $ \frac{q-\epsilon}{ 1-\epsilon} =p $. By H\"older's inequality and Lemma \ref{22},
 $$\int_{|z|<r}\left| \frac{u }{|z| }\right|^q  = \int_{|z|<r} \frac{|u|^\epsilon}{|z|^{q }}\cdot |u|^{q-\epsilon} \le \left(\int_{|z|<r}  \frac{|u|}{|z|^{\frac{q }{\epsilon}}}\right)^{\epsilon} \left(\int_{|z|<r} |u|^{p}\right)^{1-\epsilon}<\infty, \ \ \ r<<1.$$  The claim is proved.

  We are now ready to employ the boot-strap argument as in the proof to Theorem \ref{main3}. Since $u\in W^{1,1}$ near $0$,   it follows from the Sobolev embedding theorem that  $u\in L^{\frac{2n}{2n-1}} $ near $0$. Consequently, the above  proved claim gives $\frac{u}{|z|}\in L^q$ near $0$ for any $q<\frac{2n}{2n-1}$.  Lemma \ref{el} further allows us to  obtain $u\in W^{1, q}\subset L^{q'}$  near $0$ for any $q'< \frac{2n}{2n-2}$. Repeating the process  eventually leads to $u\in W^{1,q}$ near $0$ for every $q<\infty$.  The fact that $u\in W^{1, q}$ near every other point than $0$ is proved in a similar but simpler  manner (without using the claim) since $\frac{1}{|z|}\in L^\infty$ near those points. 
\end{proof}
\medskip

\begin{proof}[Proof of Theorem \ref{main4}:]The $N =1$ case is an immediate consequence of Theorem \ref{33} and Lemma \ref{hhl}. So we assume  $N\ge 2$.  Let $r$ be small such that $B_{r}\subset   \Omega$.  For each fixed $\zeta\in S^{2n-1}$, let   $v(w): = u(w\zeta), w\in D_{r}$. Making use of Lemma \ref{hhl},  Lemma \ref{cgo} and Lemma \ref{fe}, we have  $v\in W^{1, 2}_{loc}(D_r)$,  vanishes to infinite order  in the $L^2$ sense at $0$ and   satisfies
$$|\bar\partial v(w) |= |\zeta \cdot \bar\partial u(w\zeta)|\le \frac{C}{|w|} |u(w\zeta)| =  \frac{C}{|w|} |v(w)|,\ \ w\in D_{r}. $$
 Thus, for a.e. $\zeta\in S^{2n-1}$  we can apply     Theorem \ref{mm1}   to get $v = 0$ on $D_{r}$.  Hence  $u=0$ on $B_{r}$. The weak unique continuation property    further applies to give $u\equiv 0$.    
\end{proof}

\bibliographystyle{alphaspecial}

\fontsize{11}{11}\selectfont

\vspace{0.7cm}
\noindent pan@pfw.edu,

\vspace{0.2 cm}

\noindent Department of Mathematical Sciences, Purdue University Fort Wayne, Fort Wayne, IN 46805-1499, USA.\\

\noindent zhangyu@pfw.edu,

\vspace{0.2 cm}

\noindent Department of Mathematical Sciences, Purdue University Fort Wayne, Fort Wayne, IN 46805-1499, USA.\\
\end{document}